\theoremstyle{plain}
\newtheorem{thm}{Theorem}[section]
\newtheorem{cor}[thm]{Corollary}
\newtheorem{lemma}[thm]{Lemma}
\theoremstyle{definition}
\newtheorem{defn}[thm]{Definition}
\newtheorem{example}[thm]{Example}
\numberwithin{equation}{section}
\def\wt{\operatorname{wt}}
\def\Meas{\operatorname{Meas}}
\def\Gr{\operatorname{Gr}}
\def\Nest{\operatorname{Nest}}
\def\lexmax{\operatorname{lexmax}}
\def\Le{\hbox{\rotatedown{$\Gamma$}}}
\def\P{\mathcal{P}}
\def\T{\mathbf{T}}
\def\M{\mathcal{M}}
\def\posit{(S_\M)_{\geq0}}
\def\positL{(S_{\M_L})_{\geq0}}
\begin{document}

\title[$\Le$-coordinates in a totally nonnegative Grassmannian]{Combinatorial formulas for $\Le$-coordinates\\ in a totally nonnegative Grassmannian}
\author{Kelli Talaska}
\address{Department of Mathematics, University of Michigan,
Ann Arbor, MI 48109, USA} \email{kellicar@umich.edu}
\thanks{The author was supported by NSF Grants DMS-0502170 and DMS-0555880.}

\date{\today}

\subjclass[2000]{
Primary
14M15,   
Secondary
06A07,   
05A15,   
15A48
}

\keywords{positroid, totally nonnegative Grassmannian, Le-diagram}

\begin{abstract}
Postnikov constructed a decomposition of a totally nonnegative Grassmannian $(\Gr_{kn})_{\geq0}$ into positroid cells.
We provide combinatorial formulas that allow one to decide which cell a given point in $(\Gr_{kn})_{\geq0}$ belongs to and to determine affine coordinates of the point within this cell. This simplifies Postnikov's description of the inverse boundary measurement map and generalizes formulas for the top cell given by Speyer and Williams. In addition, we identify a particular subset of Pl\"ucker coordinates as a totally positive base for the set of non-vanishing Pl\"ucker coordinates for a given positroid cell.
\end{abstract}

\maketitle

Postnikov \cite{Postnikov2007} has described a cell decomposition of a totally nonnegative Grassmannian into positroid cells, which are indexed by $\Le$-diagrams; this decomposition is analogous to the matroid stratification of a real Grassmannian given by Gel'fand, Goresky, MacPherson, and Serganova \cite{GGMS1987}.  Postnikov also introduced a parametrization of each positroid cell using a collection of parameters which we call $\Le$-coordinates.

In this paper, we give an explicit criterion for determining which positroid cell contains a given point in a totally nonnegative Grassmannian and explicit combinatorial formulas for the $\Le$-coordinates of a point.  This generalizes the formulas of Speyer and Williams given for the top dimensional positroid cell \cite{SW2005}, and provides a simpler description of Postnikov's inverse boundary measurement map, which was given recursively in \cite{Postnikov2007}.  For a fixed positroid cell, our formulas are written in terms of a minimal set of Pl\"ucker coordinates, and this minimal set forms a totally positive base (in the sense of Fomin and Zelevinsky \cite{FZ1999}) for the set of Pl\"ucker coordinates which do not vanish on the specified cell.

\section{Positroid stratification and the boundary measurement map}\label{sec:defs}

In this section, we review Postnikov's positroid stratification of a totally nonnegative Grassmannian and boundary measurement map.

Let $\Gr_{kn}$ denote the Grassmannian of $k$-dimensional subspaces of $\mathbb{R}^n$.  A point $x\in \Gr_{kn}$ can be described by a collection of (projective) Pl\"ucker coordinates $(P_J(x))$, indexed by $k$-element subsets $J\subset[n]$.  The \emph{totally nonnegative Grassmannian $(\Gr_{kn})_{\geq0}$} is the subset of points  $x\in\Gr_{kn}$ such that all Pl\"ucker coordinates $P_J(x)$ can be chosen to be simultaneously nonnegative.

In \cite{GGMS1987}, the authors gave a decomposition of the Grassmannian $\Gr_{kn}$ into \emph{matroid strata}.  More precisely, for a matroid $\M\subseteq {[n]\choose k}$, let $S_\M$ denote the subset of points $x\in \Gr_{kn}$ such that $P_J(x)\neq0$ if and only if $J\in \M$.  In particular, each possible vanishing pattern of Pl\"ucker coordinates is given by a unique (realizable) matroid $\M$.  In \cite{Postnikov2007}, Postnikov studies a natural analogue of the matroid stratification for the totally nonnegative Grassmannian, a decomposition into disjoint \emph{positroid cells} of the form $\posit=S_\M\cap (\Gr_{kn})_{\geq0}$.

\begin{defn}
A \emph{$\Le$-diagram} is a partition $\lambda$ together with a filling of the boxes of the Young diagram of $\lambda$ with entries $0$ and $+$ satisfying the $\Le$-property: there is no $0$ which has a $+$ above it (in the same column) and a $+$ to its left (in the same row).

Replacing the boxes labeled $+$ in a $\Le$-diagram with positive real numbers, called \emph{$\Le$-coordinates}, we obtain a \emph{$\Le$-tableau}.  Let $\T_{L}$ denote the set of $\Le$-tableaux whose vanishing pattern is given by the $\Le$-diagram $L$.  Note that $\T_L$ is an affine space whose dimension is equal to the number of ``+'' entries in $L$, which we denote by $|L|$.

For a box $B$ in $\lambda$, we let $L_{B}$ and $T_{B}$ denote the labels of the box $B$ in the $\Le$-diagram $L$ and the $\Le$-tableau $T$, respectively.
\end{defn}

In the positroid cell decomposition of $(\Gr_{kn})_{\geq0}$ given in \cite{Postnikov2007}, the positroid cells are indexed by $\Le$-diagrams $L$ which fit inside a $k\times (n-k)$ rectangle.  Further, the positroid corresponding to a fixed $\Le$-diagram $L$ is parametrized by the $\Le$-tableaux $T\in\T_L$, i.e., those whose vanishing pattern is given by $L$.

The parametrization described below is a special case of Postnikov's boundary measurement map.  To give a formula for this parametrization, we need to introduce certain planar networks, called $\Gamma$-networks, which are in bijection with $\Le$-tableaux.

\begin{figure}[ht]
\psset{unit=0.9cm}
\begin{pspicture*}(-12.25,-1)(5,6)
\psset{dotstyle=*,dotsize=5pt 0,linewidth=0.8pt,arrowsize=3pt 2,arrowinset=0.25}
\psline[linewidth=1.6pt,arrows=*-*](-3,5)(4,5)
\psline[linewidth=1.6pt,linestyle=dashed,dash=5pt 5pt](0.85,0)(-3,0)
\psline[linewidth=1.6pt,arrows=*-*](-3,0)(-3,5)
\psline[linewidth=1.6pt,arrows=*-*](4,4)(0,4)
\psline[linewidth=1.6pt,arrows=*-*](0,4)(0,0)
\psline[linewidth=1.6pt,arrows=*-*](-2,0)(-2,3)
\psline[linewidth=1.6pt,arrows=*-*](-2,3)(4,3)
\psline[linewidth=1.6pt,arrows=*-*](-1,2)(-1,0)
\psline[linewidth=1.6pt,linestyle=dashed,dash=5pt 5pt](-3,0)(-3.5,0)
\psline[linewidth=1.6pt,linestyle=dashed,dash=5pt 5pt](-3.5,0)(-3.5,5.5)
\psline[linewidth=1.6pt,linestyle=dashed,dash=5pt 5pt](-3.5,5.5)(4,5.5)
\psline[linewidth=1.6pt,linestyle=dashed,dash=5pt 5pt](4,5.5)(4,5)
\psline[linewidth=1.6pt,linestyle=dashed,dash=5pt 5pt](0.85,1.15)(0.85,0)
\psline[linewidth=1.6pt,linestyle=dashed,dash=5pt 5pt](0.85,1.15)(2.85,1.15)
\psline[linewidth=1.6pt,linestyle=dashed,dash=5pt 5pt](2.85,1.15)(2.85,2.15)
\psline[linewidth=1.6pt,linestyle=dashed,dash=5pt 5pt](2.85,2.15)(4,2.15)
\psline[linewidth=1.6pt,arrows=*-*](-3,1)(0.85,1)
\psline[linewidth=1.6pt,arrows=*-*](1,3)(1,1.15)
\psline[linewidth=1.6pt,arrows=*-*](-1,2)(2.85,2)
\psline[linewidth=1.6pt,arrows=*-*](3,3)(3,2.15)
\psline[linewidth=1.6pt,linestyle=dashed,dash=5pt 5pt](4,5)(4,2.15)
\psline[linewidth=1.6pt,arrows=*-*](0,3)(0,3)\psline[linewidth=1.6pt,arrows=*-*](0,2)(0,2)
\psline[linewidth=1.6pt,arrows=*-*](0,1)(0,1)\psline[linewidth=1.6pt,arrows=*-*](-1,1)(-1,1)
\psline[linewidth=1.6pt,arrows=*-*](-2,1)(-2,1)\psline[linewidth=1.6pt,arrows=*-*](1,2)(1,2)

\psline[linewidth=1.2pt](0.5,5)(0.65,4.85)
\psline[linewidth=1.2pt](0.5,5)(0.65,5.15)
\psline[linewidth=1.2pt](2,4)(2.15,3.85)
\psline[linewidth=1.2pt](2,4)(2.15,4.15)
\psline[linewidth=1.2pt](-1,3)(-0.85,3.15)
\psline[linewidth=1.2pt](-1,3)(-0.85,2.85)
\psline[linewidth=1.2pt](0.5,3)(0.65,3.15)
\psline[linewidth=1.2pt](0.5,3)(0.65,2.85)
\psline[linewidth=1.2pt](2,3)(2.15,3.15)
\psline[linewidth=1.2pt](2,3)(2.15,2.85)
\psline[linewidth=1.2pt](3.5,3)(3.65,3.15)
\psline[linewidth=1.2pt](3.5,3)(3.65,2.85)
\psline[linewidth=1.2pt](-0.5,2)(-0.35,2.15)
\psline[linewidth=1.2pt](-0.5,2)(-0.35,1.85)
\psline[linewidth=1.2pt](0.5,2)(0.65,2.15)
\psline[linewidth=1.2pt](0.5,2)(0.65,1.85)
\psline[linewidth=1.2pt](2,2)(2.15,2.15)
\psline[linewidth=1.2pt](2,2)(2.15,1.85)
\psline[linewidth=1.2pt](-2.5,1)(-2.35,1.15)
\psline[linewidth=1.2pt](-2.5,1)(-2.35,0.85)
\psline[linewidth=1.2pt](-1.5,1)(-1.35,1.15)
\psline[linewidth=1.2pt](-1.5,1)(-1.35,0.85)
\psline[linewidth=1.2pt](-0.5,1)(-0.35,1.15)
\psline[linewidth=1.2pt](-0.5,1)(-0.35,0.85)
\psline[linewidth=1.2pt](0.5,1)(0.65,1.15)
\psline[linewidth=1.2pt](0.5,1)(0.65,0.85)

\psline[linewidth=1.4pt](-3,3)(-2.85,3.15)
\psline[linewidth=1.4pt](-3,3)(-3.15,3.15)
\psline[linewidth=1.4pt](-3,0.5)(-2.85,0.65)
\psline[linewidth=1.4pt](-3,0.5)(-3.15,0.65)
\psline[linewidth=1.4pt](-2,2)(-1.85,2.15)
\psline[linewidth=1.4pt](-2,2)(-2.15,2.15)
\psline[linewidth=1.4pt](-2,0.5)(-1.85,0.65)
\psline[linewidth=1.4pt](-2,0.5)(-2.15,0.65)
\psline[linewidth=1.4pt](-1,1.5)(-0.85,1.65)
\psline[linewidth=1.4pt](-1,1.5)(-1.15,1.65)
\psline[linewidth=1.4pt](-1,0.5)(-0.85,0.65)
\psline[linewidth=1.4pt](-1,0.5)(-1.15,0.65)
\psline[linewidth=1.4pt](0,3.5)(-0.15,3.65)
\psline[linewidth=1.4pt](0,3.5)(0.15,3.65)
\psline[linewidth=1.4pt](0,2.5)(-0.15,2.65)
\psline[linewidth=1.4pt](0,2.5)(0.15,2.65)
\psline[linewidth=1.4pt](0,1.5)(-0.15,1.65)
\psline[linewidth=1.4pt](0,1.5)(0.15,1.65)
\psline[linewidth=1.4pt](0,0.5)(-0.15,0.65)
\psline[linewidth=1.4pt](0,0.5)(0.15,0.65)
\psline[linewidth=1.4pt](1,2.5)(0.85,2.65)
\psline[linewidth=1.4pt](1,2.5)(1.15,2.65)
\psline[linewidth=1.4pt](1,1.5)(0.85,1.65)
\psline[linewidth=1.4pt](1,1.5)(1.15,1.65)
\psline[linewidth=1.4pt](3,2.5)(2.85,2.65)
\psline[linewidth=1.4pt](3,2.5)(3.15,2.65)

\rput[tl](4.2,5.1){$1$}
\rput[tl](4.2,4.1){$2$}
\rput[tl](4.2,3.1){$3$}
\rput[tl](0.95,0.8){$8$}
\rput[tl](-0.1,-0.1){$9$}
\rput[tl](-2.2,-0.1){$11$}
\rput[tl](-3.22,-0.1){$12$}
\rput[tl](-1.16,-0.1){$10$}
\rput[tl](3.2,2.05){$4$}
\rput[tl](2.95,1.8){$5$}
\rput[tl](1.92,1){$6$}
\rput[tl](1.2,1){$7$}
\psline[linewidth=1.6pt,linestyle=dashed,dash=3pt 3pt](-3,0)(-3.5,0)
\psline[linewidth=1.6pt,linestyle=dashed,dash=3pt 3pt](-3.5,0)(-3.5,5.5)
\psline[linewidth=1.6pt,linestyle=dashed,dash=3pt 3pt](-3.5,5.5)(4,5.5)
\psline[linewidth=1.6pt,linestyle=dashed,dash=3pt 3pt](4,5.5)(4,5)

\rput[tl](-2.75,4.65){$T_{17}$}
\rput[tl](0.25,3.65){$T_{24}$}
\rput[tl](-1.75,2.65){$T_{36}$}
\rput[tl](0.25,2.65){$T_{34}$}
\rput[tl](1.25,2.65){$T_{33}$}
\rput[tl](3.25,2.65){$T_{31}$}
\rput[tl](-0.75,1.65){$T_{45}$}
\rput[tl](0.25,1.65){$T_{44}$}
\rput[tl](1.25,1.65){$T_{43}$}
\rput[tl](-2.75,0.65){$T_{57}$}
\rput[tl](-1.75,0.65){$T_{56}$}
\rput[tl](-0.75,0.65){$T_{55}$}
\rput[tl](0.25,0.65){$T_{54}$}

\psline[linewidth=1.2pt](-12,5)(-5,5)
\psline[linewidth=1.2pt](-5,5)(-5,2)
\psline[linewidth=1.2pt](-5,2)(-6,2)
\psline[linewidth=1.2pt](-6,2)(-6,1)
\psline[linewidth=1.2pt](-6,1)(-8,1)
\psline[linewidth=1.2pt](-8,1)(-8,0)
\psline[linewidth=1.2pt](-8,0)(-12,0)
\psline[linewidth=1.2pt](-12,0)(-12,1)
\psline[linewidth=1.2pt](-12,1)(-12,2)
\psline[linewidth=1.2pt](-12,2)(-12,3)
\psline[linewidth=1.2pt](-12,3)(-12,4)
\psline[linewidth=1.2pt](-12,4)(-12,5)
\psline[linewidth=1.2pt](-11,5)(-11,0)
\psline[linewidth=1.2pt](-10,0)(-10,5)
\psline[linewidth=1.2pt](-9,5)(-9,0)
\psline[linewidth=1.2pt](-8,5)(-8,1)
\psline[linewidth=1.2pt](-7,5)(-7,1)
\psline[linewidth=1.2pt](-6,5)(-6,2)
\psline[linewidth=1.2pt](-5,4)(-12,4)
\psline[linewidth=1.2pt](-12,3)(-5,3)
\psline[linewidth=1.2pt](-6,2)(-12,2)
\psline[linewidth=1.2pt](-12,1)(-8,1)
\rput[tl](-11.75,4.6){$T_{17}$}
\rput[tl](-8.75,3.6){$T_{24}$}
\rput[tl](-10.75,2.6){$T_{36}$}
\rput[tl](-5.75,2.6){$T_{31}$}
\rput[tl](-8.75,2.6){$T_{34}$}
\rput[tl](-9.75,1.6){$T_{45}$}
\rput[tl](-8.75,1.6){$T_{44}$}
\rput[tl](-10.75,0.6){$T_{56}$}
\rput[tl](-11.75,0.6){$T_{57}$}
\rput[tl](-8.75,0.6){$T_{54}$}
\rput[tl](-9.75,0.6){$T_{55}$}
\rput[tl](-7.75,1.6){$T_{43}$}
\rput[tl](-7.75,2.6){$T_{33}$}
\rput[tl](-10.65,4.6){$0$}
\rput[tl](-9.65,4.6){$0$}
\rput[tl](-8.65,4.6){$0$}
\rput[tl](-7.65,4.6){$0$}
\rput[tl](-6.65,4.6){$0$}
\rput[tl](-5.65,4.6){$0$}
\rput[tl](-11.65,3.6){$0$}
\rput[tl](-10.65,3.6){$0$}
\rput[tl](-9.65,3.6){$0$}
\rput[tl](-7.65,3.6){$0$}
\rput[tl](-6.65,3.6){$0$}
\rput[tl](-5.65,3.6){$0$}
\rput[tl](-11.65,2.6){$0$}
\rput[tl](-9.65,2.6){$0$}
\rput[tl](-6.65,2.6){$0$}
\rput[tl](-11.65,1.6){$0$}
\rput[tl](-10.65,1.6){$0$}
\rput[tl](-6.65,1.6){$0$}
\end{pspicture*}
\caption{The $\Le$-tableau $T$ and $\Gamma$-network $N_T$ for a point in $(Gr_{5,12})_{\geq0}$.  We have $\lambda=(7,7,7,6,4)$ and $I=\{1,2,3,5,8\}$.}\label{fig:main}
\end{figure}
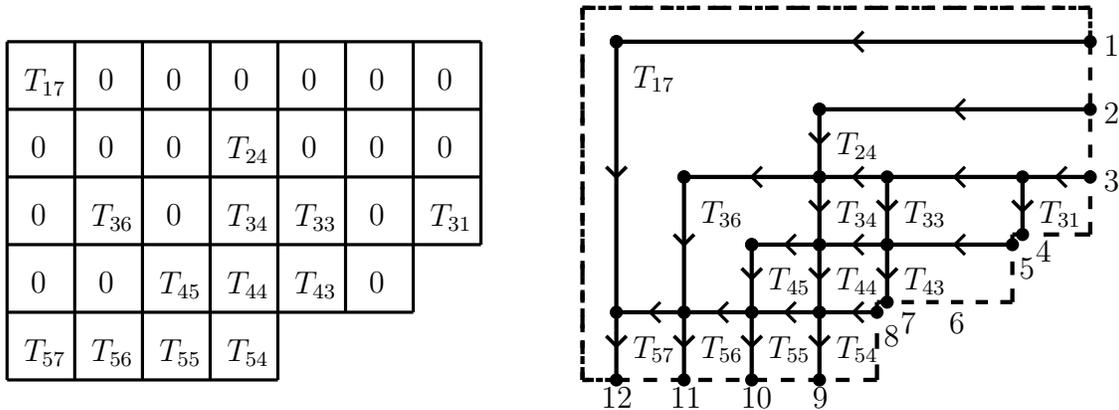

For each $\Le$-diagram $L$ of shape $\lambda$ which fits inside a $k\times(n-k)$ rectangle, we will construct a $\Gamma$-graph $G_L$ corresponding to $L$.  For each $\Le$-tableau $T\in\T_L$, we will then assign weights to the faces of $G_L$ to obtain a $\Gamma$-network $N_T$.

We begin by establishing the boundary of the planar network.  First, we draw a disk whose boundary consists of the north and west edges of the $k\times (n-k)$ box and the path determining the southeast boundary of $\lambda$, all shifted slightly northwest.  Place a vertex, called a \emph{boundary source}, at the end of each row (including empty rows) of $\lambda$, and a vertex, called a \emph{boundary sink}, at the end of each column of $\lambda$ (including empty columns).  Label these in sequence with the integers $\{1,2,\ldots, n\}$, following the path from the northeast corner to the southwest corner which determines $\lambda$.  Let $I=\{i_1< i_2< \cdots< i_k\}\subset [n]$ be the set of boundary sources, so that the complement of $I$, $[n]\setminus I=\{j_1<j_2<\cdots<j_{n-k}\}$, is the set of boundary sinks.

Whenever $L_{B}=+$, we draw the \emph{$B$ hook}, i.e., the hook whose corner is the northwest corner of the box $B=(r,c)$ (in the $r^{\rm{th}}$ row from the top and the $c^{\rm{th}}$ column from the right) and which has a horizontal path directed from the boundary source $i_r$ to the corner and a vertical path directed from the corner to the boundary sink $j_c$.  This process yields a $\Gamma$-graph $G_L$.

To obtain the $\Gamma$-network $N_T$ from $G_L$, we must assign weights to each of the faces.  Note that there is exactly one face for each box $B$ in $\lambda$ satisfying $L_{B}=+$ (and this face has a portion of the $B$ hook as its northwest boundary), and in addition, there is one face whose northwest boundary is the boundary of the disk.  For each box $B$ with $L_{B}=+$, we assign to the corresponding face the positive real weight $T_{B}$.  To the face whose northwest boundary is the boundary of the disk, we assign the weight $\prod\frac{1}{T_{B}}$, taking the product over boxes satisfying $L_{B}=+$, so that the product of all face weights in $N$ is exactly $1$.\\

In the special case of $\Gamma$-networks, the definition of Postnikov's map given in \cite{Postnikov2007} can be viewed as an instance of the classical formula of Lindstr\"om \cite{Lindstrom1973}.  This formula is usually given in terms of weights of edges; we apply Postnikov's transformation from edge weights to face weights \cite{Postnikov2007} to obtain the following restatement of his definition.

\begin{defn}\label{def:gamma-meas-map}
For each $\Le$-diagram $L$ which fits in a $k\times (n-k)$ rectangle, the \emph{boundary measurement map} $\Meas_L:\T_{L}\rightarrow (\Gr_{kn})_{\geq0}$ is defined by
\begin{equation*}
\label{eq:main-formula}
P_J(\Meas_L(T))=\sum_{A\in\mathcal{A}_J(N_T)} \wt(A),
\end{equation*}
where \begin{itemize}
\item $N_T$ is the $\Gamma$-network corresponding to the $\Le$-tableau $T$, and its boundary source set is labeled by $I$,
\item $\mathcal{A}_J(N_T)$ is the collection of non-intersecting path families $A=\{A_i\}_{i\in I}$ in $N_T$ from the boundary sources $I$ to the boundary destinations $J$,
\item $\wt(A)=\prod_{i\in I} \wt(A_i)$, and
\item the weight $\wt(A_i)$ of a path $A_i$ in the family $A$ is the product of the weights of the faces of $N_T$ which lie southeast of $A_i$.\end{itemize}
\end{defn}

For a $\Le$-diagram $L$, let $G_L$ be the corresponding $\Gamma$-graph.  Let us define the set $\M_L\subseteq{[n]\choose k}$ by the condition that $J\in \M_L$ if and only if there exists a non-intersecting path collection in $G_L$ with sources $I$ and destinations $J$.  It can be shown that $\M_L$ has the structure of a matroid, but this is not necessary for our purposes. Further, it is easily verified that for distinct $\Le$-diagrams $L$ and $L^*$, we have $\M_L\neq\M_{L^*}$.

\begin{thm} \cite{Postnikov2007} \label{thm:postnikov-bijection}
For each $\Le$-diagram $L$ which fits in a $k\times (n-k)$ rectangle, the map $\Meas_L:\T_{L}\rightarrow (\Gr_{kn})_{\geq0}$ is injective, and the image $\Meas_L(\T_L)$ is precisely the positroid cell $\positL$.

These positroid cells are pairwise disjoint, and the union $\bigcup_L \positL$, taken over all $\Le$-diagrams $L$ which fit inside the $k\times(n-k)$ rectangle, is the entire totally nonnegative Grassmannian $(\Gr_{kn})_{\geq0}$.  Each positroid cell $\positL$ is  a topological cell; that is, $\positL$ is isomorphic to $\mathbb{R}^{|L|}$, where $|L|$ is the number of ``+'' entries in $L$.  Thus, the positroid cells form a cell decomposition of $(\Gr_{kn})_{\geq0}$.
\end{thm}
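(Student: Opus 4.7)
My plan is to extract Theorem~\ref{thm:postnikov-bijection} from four ingredients: Lindström-style positivity, a matching of vanishing patterns, an explicit rational inverse to $\Meas_L$, and a global covering argument for $(\Gr_{kn})_{\geq 0}$. First, since Definition~\ref{def:gamma-meas-map} writes each Plücker coordinate of $\Meas_L(T)$ as a sum (over non-intersecting path families) of products of strictly positive face weights with no cancellation, every $P_J(\Meas_L(T))$ is nonnegative, and $P_J(\Meas_L(T))>0$ precisely when $\mathcal{A}_J(N_T)\neq\emptyset$, i.e.\ exactly when $J\in\M_L$. This already shows $\Meas_L(\T_L)\subseteq\positL$ and that every point of the image has vanishing pattern equal to $\M_L$; pairwise disjointness of the images is then immediate from the already-recorded fact that $\M_L\neq\M_{L^*}$ for distinct $\Le$-diagrams.

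Second, to obtain injectivity of $\Meas_L$ and to prepare the surjectivity, I would construct an explicit inverse: given $x\in\positL$, recover each weight $T_B$ as an explicit ratio of Plücker coordinates $P_J(x)$. This is precisely the combinatorial content of the present paper, generalizing the Speyer--Williams top-cell formulas. The underlying mechanism is a local gauge move on the $\Gamma$-network at a corner box $B$: deleting the $B$-hook alters a prescribed handful of face weights, and a carefully chosen ratio of Plücker coordinates cancels everything but $T_B$. Iterating box by box peels the entire $\Le$-tableau out of $x$. Because all Plücker coordinates appearing in the denominators belong to $\M_L$ and are therefore strictly positive on $\positL$, the inverse is a well-defined rational, hence continuous, map. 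Combined with polynomiality of $\Meas_L$ and the identification $\T_L\cong\mathbb{R}_{>0}^{|L|}\cong\mathbb{R}^{|L|}$, this promotes the bijection $\T_L\leftrightarrow\positL$ to a homeomorphism, yielding the claimed topological cell structure.

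Third, the remaining task --- that $\bigcup_L\positL=(\Gr_{kn})_{\geq 0}$ --- is what I expect to be the main obstacle. My approach: given $x\in(\Gr_{kn})_{\geq 0}$ with Plücker support $\M\subseteq\binom{[n]}{k}$, build a $\Le$-diagram $L$ by an inductive scan of $\M$ that reads off, lex-/northwest-most first, the set of $k$-subsets attainable as targets of non-intersecting flows, and translates this data into the partition $\lambda$ together with its $0/+$ filling. The nontrivial point is that the filling so produced satisfies the $\Le$-property; this must be deduced from total nonnegativity of $x$ by invoking three-term Plücker relations to exclude the forbidden configuration of a $0$ having a $+$ both above it and to its left (in the presence of a sign, three-term relations force one of the supposedly vanishing Plücker coordinates to be nonzero). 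Once $L$ is built, plugging $x$ into the explicit formulas from the previous paragraph produces a candidate $T\in\T_L$, and the flow expansion of Definition~\ref{def:gamma-meas-map} then verifies $\Meas_L(T)=x$ Plücker-by-Plücker. Combined with the disjointness of images this completes the theorem.
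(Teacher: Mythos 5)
This theorem is quoted from \cite{Postnikov2007}; the paper gives no proof of its own, remarking only that Postnikov establishes it via a recursive algorithm for inverting the boundary measurement map. Your first two paragraphs are essentially sound and match that outline: the subtraction-free flow expansion gives $\Meas_L(\T_L)\subseteq\positL$ and identifies the vanishing pattern of $\Meas_L(T)$ with $\M_L$; disjointness follows from $\M_L\neq\M_{L^*}$; and injectivity (indeed a homeomorphism onto the image) follows from recovering each face weight $T_B$ as a ratio of Pl\"ucker coordinates of $\Meas_L(T)$, which is exactly the mechanism of Sections~\ref{sec:mobius} and~\ref{sec:minimal} and is not circular, since only the definition of $\Meas_L$ is needed to derive those ratios.

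The genuine gap is in your third paragraph, i.e.\ the claim that $\bigcup_L\positL=(\Gr_{kn})_{\geq0}$. Two steps are asserted rather than proved. First, that the $0/+$ filling read off from $\M(x)$ satisfies the $\Le$-property is a real lemma: the appeal to ``three-term Pl\"ucker relations'' forcing a nonvanishing coordinate requires choosing the right relation and checking that all other terms in it are nonnegative, and you have not exhibited this. Second, and more seriously, the concluding sentence ``the flow expansion of Definition~\ref{def:gamma-meas-map} then verifies $\Meas_L(T)=x$ Pl\"ucker-by-Pl\"ucker'' is precisely the hard content of the theorem. Your candidate $T$ is constructed so that $x$ and $\Meas_L(T)$ agree on the $|L|$ Pl\"ucker coordinates (or ratios) used in the inversion formula; nothing you have said explains why they must then agree on all $\binom{n}{k}$ of them. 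A point of $(\Gr_{kn})_{\geq0}$ is not determined by $|L|$ of its Pl\"ucker coordinates unless one already knows it lies in the $|L|$-dimensional set $\Meas_L(\T_L)$, which is what is being proved. Postnikov closes this loop by induction on the network: he strips off a boundary hook, shows the resulting reduced point is still totally nonnegative with the predicted smaller matroid, and invokes the inductive hypothesis. Some such descent (or an equivalent argument) is indispensable and is missing from your outline.
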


In Postnikov's work \cite{Postnikov2007}, this result is proved by giving a recursive algorithm for finding the $\Le$-tableau $T$ corresponding to a given point in $(\Gr_{kn})_{\geq0}$.  In this paper, we obtain explicit combinatorial formulas solving the same problem.  This is done in two stages.  In Section~\ref{sec:le-diagram}, we give an explicit rule for determining which positroid cell contains a given point.  In Sections~\ref{sec:mobius}~and~\ref{sec:minimal}, we give two combinatorial formulas for the inverse of each particular map $\Meas_L$ (i.e., formulas for the corresponding $\Le$-coordinates) in terms of the relevant Pl\"ucker coordinates.

\section{Determining the positroid cell of a point in $(\Gr_{kn})_{\geq0}$}\label{sec:le-diagram}

In this section, we give an explicit formula for the $\Le$-tableau $L(x)$ that determines which positroid cell $\positL$ a given point $x\in(\Gr_{kn})_{\geq0}$ belongs to.

Let $x\in(\Gr_{kn})_{\geq0}$ be given by its Pl\"ucker coordinates $(P_J(x):J\in{{[n]}\choose k})$.  Order the $k$-subsets of $[n]$ lexicographically.  That is, a set $A=\{a_1<a_2<\cdots<a_k\}$ is less than or equal to a set $B=\{b_1<b_2<\cdots<b_k\}$ if at the smallest index $m$ for which $a_m\neq b_m$, we have $a_m<b_m$.

For $\M\subseteq {[n]\choose k}$, let $I=\{i_1<i_2<\cdots < i_k\}$ be the lexicographically minimum set in $\M$. Let $[n]\setminus I=\{j_1<j_2<\cdots<j_{n-k}\}$ be the complement of $I$.  Let $\lambda(\M)$ be the partition in the $k\times(n-k)$ rectangle whose southeastern border is given by the path from the northeast corner of the $k\times (n-k)$ rectangle to its southwest corner which has edges to the south in positions $I$ and edges to the west in positions $[n]\setminus I$.  More precisely, the length of the $t^{th}$ row of $\lambda$ is the number of elements of $[n]\setminus I$ which are greater than $i_t$, i.e., $\lambda_t=|j_s\in [n]\setminus I:j_s>i_t|$.  We let $A_{r,c}=[n]\setminus\{i_r+1, i_r+2, \ldots, j_c-2,j_c-1\}$.  Set $M(B,\M)=\left(M'(B,\M)\setminus\{i_r\}\right)\cup\{j_c\}$, where
\[
M'(B,\M)=\lexmax\left\{J\in \M(x): J\cap A_{r,c}=I\cap A_{r,c}\right\}.
\]

In plain language, this says that we are taking the maximum over sets $J$ which contain all of the sources outside the open interval from $i_r$ to $j_c$ and none of the sinks, i.e., those sets whose interesting behavior happens \emph{inside} the interval.

Recall that for a $\Le$-diagram $L$, we have $J\in \M_L$ if and only if there exists a non-intersecting path collection in $G_L$ with destination set $J$.

\begin{lemma}\label{lem:nw-path-coll}
Suppose that $B$ is a box in a $\Le$-diagram $L$ of shape $\lambda(L)$.  Then
\begin{enumerate}
\item $M'(B,\M_L)$ is the destination set of a unique non-intersecting path collection in the $\Gamma$-graph $G_L$, namely the northwest-most path collection lying strictly southeast of the $B$ hook,
\item $M(B,\M_L)\in \M_L$ if and only if $L_B=+$, and
\item the vanishing pattern for the Pl\"ucker coordinates of $\positL$ is uniquely determined by the vanishing pattern of the subset $\{P_{M(B,\M_L)}\}$, ranging over all boxes $B$ in $\lambda(L)$.
\end{enumerate}
\end{lemma}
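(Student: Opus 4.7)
Paths in the $\Gamma$-graph $G_L$ are monotone southwest staircases: each path alternates between westward segments along the tops of rows and southward segments along the left sides of columns, turning only at NW corners of $+$-boxes. By planarity plus this monotone structure, any non-intersecting path collection in $G_L$ is uniquely determined by its destination set.

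\emph{Part (1).} The constraint $J \cap A_{r,c} = I \cap A_{r,c}$ fixes every destination outside the open interval $(i_r, j_c)$ to equal the corresponding source, so those paths are trivial. The non-trivial paths go from sources $i_s$ with $i_r < i_s < j_c$ to destinations in $(i_r, j_c)$; monotonicity together with non-crossing against the trivial source-points outside the interval confines them to the region weakly SE of the $B$-hook. Lex-maximizing $J$ among such collections corresponds, by the standard greedy/planarity argument for planar networks, to the unique NW-most non-intersecting collection in this region, with destination set exactly $M'(B,\M_L)$.

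\emph{Part (2), $(\Rightarrow)$.} If $L_B = +$, the $B$-hook is an edge-path in $G_L$ from $i_r$ to $j_c$ lying precisely on the NW boundary of the SE region. Substituting it in for the trivial path at $i_r$ in the NW-most collection yields a non-intersecting collection with destination set $M(B,\M_L) = (M'(B,\M_L) \setminus \{i_r\}) \cup \{j_c\}$, so $M(B,\M_L) \in \M_L$.

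\emph{Part (2), $(\Leftarrow)$.} Suppose $M(B,\M_L) \in \M_L$. Comparing the resulting collection with the NW-most collection of Part (1), planarity forces a monotone SW path $P$ from $i_r$ to $j_c$ to lie weakly north-and-west of every non-trivial path of the $M'$ collection. Since $i_r$ and $j_c$ are the extreme NW boundary points of the SE region, the only available route for $P$ is along the $B$-hook itself; hence the hook must be an edge-path in $G_L$, which is to say $L_B = +$.

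\emph{Part (3).} On $\positL$, the vanishing pattern of the Pl\"ucker coordinates is the complement of $\M_L$ in $\binom{[n]}{k}$, so it is encoded by $\M_L$. Part (2) says $M(B,\M_L) \in \M_L$ if and only if $L_B = +$, so the vanishing pattern of the subset $\{P_{M(B,\M_L)}\}_B$ identifies the $+$-boxes of $L$ and thereby recovers $L$. Since $L \mapsto \M_L$ is injective on $\Le$-diagrams (as noted just before the statement), $\M_L$ is then determined, and hence so is the full vanishing pattern.

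\emph{Main obstacle.} The main work lies in Part (2), $(\Leftarrow)$: making rigorous the planar/topological argument that any such path $P$ must coincide with the $B$-hook. The delicate point is that the non-crossing matching between $I \setminus M$ and $M \setminus I$ need not directly pair $i_r$ with $j_c$, so the existence of an effective $i_r$-to-$j_c$ route must be extracted from the global topology of the collection; once extracted, the NW-most characterization from Part (1) is what pins it down to the $B$-hook. I expect the cleanest route to be an inductive argument showing that any monotone SW staircase from $i_r$ to $j_c$ compatible with the NW-most collection must traverse every edge of the $B$-hook, so that $L_B = +$.
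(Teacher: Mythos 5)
The paper offers no actual argument here (part (1) is left as an exercise, and (2), (3) are said to follow from the definitions), so there is no "paper approach" to compare against; your attempt to supply the details is the right instinct, and your treatment of part (2)$(\Rightarrow)$ and part (3) is essentially correct. However, your opening claim --- that any non-intersecting path collection in $G_L$ is determined by its destination set --- is false, and it is carrying the entire weight of the uniqueness assertion in part (1). Counterexample: take the top cell of $(\Gr_{2,4})_{\geq0}$, so $\lambda=(2,2)$ with all entries $+$, $I=\{1,2\}$, sink $3$ at the bottom of the rightmost column and sink $4$ at the bottom of the leftmost column. The destination set $\{2,4\}$ is realized by two distinct non-intersecting collections: the trivial path at $2$ together with either the full hook of box $(1,2)$, or the staircase that turns south at the northwest corner of box $(1,1)$, west again at the northwest corner of box $(2,1)$, and south at the northwest corner of box $(2,2)$. (This is just the familiar fact that $P_{24}$ on this cell is a binomial rather than a monomial in the face weights; if your claim held, every Pl\"ucker coordinate would be a single monomial, contradicting Definition~\ref{def:gamma-meas-map}.) The uniqueness in part (1) is thus a special property of the lexicographically maximal sets $M'(B,\M_L)$ and needs its own argument: first use west/south monotonicity of paths to show that any collection with destination set $M'(B,\M_L)$ is automatically confined strictly southeast of the $B$ hook, and then show that a path deviating southeast of the northwest-most collection could be rerouted to a lexicographically larger sink while preserving non-intersection and the constraint $J\cap A_{r,c}=I\cap A_{r,c}$, contradicting maximality.

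The second gap is the one you flag yourself, and it is real: in part (2)$(\Leftarrow)$ the non-crossing matching realizing $M(B,\M_L)$ need not pair $i_r$ with $j_c$, and, more to the point, the mere existence of a path from $i_r$ to $j_c$ in $G_L$ does not imply $L_B=+$. In the $2\times2$ example with $L_{(1,1)}=L_{(2,1)}=L_{(2,2)}=+$ and $L_{(1,2)}=0$, the staircase above is a genuine path from $1$ to $4$, yet $L_{(1,2)}=0$; what fails is realizing $M((1,2),\M_L)=\{3,4\}$, which also demands the path $2\to3$ alongside it. So the argument must use the whole collection: the path ending at $j_c$ has to coexist with non-intersecting paths to every sink of $M'(B,\M_L)\setminus I$, and it is this crowding, together with the $\Le$-property, that forces the northwest route to be the $B$ hook itself. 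As written, ``the only available route for $P$ is along the $B$-hook'' asserts the conclusion rather than proving it. Both gaps are repairable, but neither is cosmetic, and the first rests on a statement that is simply not true of $\Gamma$-graphs.
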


\begin{proof}
The proof of the first claim is left as a straightforward exercise for the reader; the second and third then follow immediately from the definitions.
\end{proof}

\begin{example}
 On the left in Figure~\ref{fig:addahook}, we have the $\Gamma$-graph of the example in Figure~\ref{fig:main}. We see that $M'((2,6),\M_L)=\{1,2,7,9,10\}$, corresponding to the solid path collection on the right in Figure~\ref{fig:addahook}. Adding in the potential (dotted) hook from $i_2=2$ to $j_6=11$, we have $M((2,6),\M_L)=\{1,7,9,10,11\}$.  Since this hook does not occur in the $\Gamma$-graph, we must have $P_{M((2,6),\M_L)}(x)=0$ for this point.

\begin{figure}[ht]
\psset{unit=0.6cm}
\begin{pspicture*}(-4,-1)(5,6)
\psset{dotstyle=*,dotsize=3pt 0,linewidth=0.8pt,arrowsize=3pt 2,arrowinset=0.25}
\psline[linewidth=1.4pt,arrows=*-*](-3,5)(4,5)
\psline[linewidth=1.4pt,linestyle=dashed,dash=5pt 5pt](0.85,0)(-3,0)
\psline[linewidth=1.4pt,arrows=*-*](-3,0)(-3,5)
\psline[linewidth=1.4pt,arrows=*-*](4,4)(0,4)
\psline[linewidth=1.4pt,arrows=*-*](0,4)(0,0)
\psline[linewidth=1.4pt,arrows=*-*](-2,0)(-2,3)
\psline[linewidth=1.4pt,arrows=*-*](-2,3)(4,3)
\psline[linewidth=1.4pt,arrows=*-*](-1,2)(-1,0)
\psline[linewidth=1.4pt,linestyle=dashed,dash=5pt 5pt](-3,0)(-3.5,0)
\psline[linewidth=1.4pt,linestyle=dashed,dash=5pt 5pt](-3.5,0)(-3.5,5.5)
\psline[linewidth=1.4pt,linestyle=dashed,dash=5pt 5pt](-3.5,5.5)(4,5.5)
\psline[linewidth=1.4pt,linestyle=dashed,dash=5pt 5pt](4,5.5)(4,5)
\psline[linewidth=1.4pt,linestyle=dashed,dash=5pt 5pt](0.85,1.15)(0.85,0)
\psline[linewidth=1.4pt,linestyle=dashed,dash=5pt 5pt](0.85,1.15)(2.85,1.15)
\psline[linewidth=1.4pt,linestyle=dashed,dash=5pt 5pt](2.85,1.15)(2.85,2.15)
\psline[linewidth=1.4pt,linestyle=dashed,dash=5pt 5pt](2.85,2.15)(4,2.15)
\psline[linewidth=1.4pt,arrows=*-*](-3,1)(0.85,1)
\psline[linewidth=1.4pt,arrows=*-*](1,3)(1,1.15)
\psline[linewidth=1.4pt,arrows=*-*](-1,2)(2.85,2)
\psline[linewidth=1.4pt,arrows=*-*](3,3)(3,2.15)
\psline[linewidth=1.4pt,linestyle=dashed,dash=5pt 5pt](4,5)(4,2.15)
\psline[linewidth=1.6pt,arrows=*-*](0,3)(0,3)\psline[linewidth=1.6pt,arrows=*-*](0,2)(0,2)
\psline[linewidth=1.6pt,arrows=*-*](0,1)(0,1)\psline[linewidth=1.6pt,arrows=*-*](-1,1)(-1,1)
\psline[linewidth=1.6pt,arrows=*-*](-2,1)(-2,1)\psline[linewidth=1.6pt,arrows=*-*](1,2)(1,2)

\psline[linewidth=1.2pt](0.5,5)(0.65,4.85)
\psline[linewidth=1.2pt](0.5,5)(0.65,5.15)
\psline[linewidth=1.2pt](2,4)(2.15,3.85)
\psline[linewidth=1.2pt](2,4)(2.15,4.15)
\psline[linewidth=1.2pt](-1,3)(-0.85,3.15)
\psline[linewidth=1.2pt](-1,3)(-0.85,2.85)
\psline[linewidth=1.2pt](0.5,3)(0.65,3.15)
\psline[linewidth=1.2pt](0.5,3)(0.65,2.85)
\psline[linewidth=1.2pt](2,3)(2.15,3.15)
\psline[linewidth=1.2pt](2,3)(2.15,2.85)
\psline[linewidth=1.2pt](3.5,3)(3.65,3.15)
\psline[linewidth=1.2pt](3.5,3)(3.65,2.85)
\psline[linewidth=1.2pt](-0.5,2)(-0.35,2.15)
\psline[linewidth=1.2pt](-0.5,2)(-0.35,1.85)
\psline[linewidth=1.2pt](0.5,2)(0.65,2.15)
\psline[linewidth=1.2pt](0.5,2)(0.65,1.85)
\psline[linewidth=1.2pt](2,2)(2.15,2.15)
\psline[linewidth=1.2pt](2,2)(2.15,1.85)
\psline[linewidth=1.2pt](-2.5,1)(-2.35,1.15)
\psline[linewidth=1.2pt](-2.5,1)(-2.35,0.85)
\psline[linewidth=1.2pt](-1.5,1)(-1.35,1.15)
\psline[linewidth=1.2pt](-1.5,1)(-1.35,0.85)
\psline[linewidth=1.2pt](-0.5,1)(-0.35,1.15)
\psline[linewidth=1.2pt](-0.5,1)(-0.35,0.85)
\psline[linewidth=1.2pt](0.5,1)(0.65,1.15)
\psline[linewidth=1.2pt](0.5,1)(0.65,0.85)

\psline[linewidth=1.4pt](-3,3)(-2.85,3.15)
\psline[linewidth=1.4pt](-3,3)(-3.15,3.15)
\psline[linewidth=1.4pt](-3,0.5)(-2.85,0.65)
\psline[linewidth=1.4pt](-3,0.5)(-3.15,0.65)
\psline[linewidth=1.4pt](-2,2)(-1.85,2.15)
\psline[linewidth=1.4pt](-2,2)(-2.15,2.15)
\psline[linewidth=1.4pt](-2,0.5)(-1.85,0.65)
\psline[linewidth=1.4pt](-2,0.5)(-2.15,0.65)
\psline[linewidth=1.4pt](-1,1.5)(-0.85,1.65)
\psline[linewidth=1.4pt](-1,1.5)(-1.15,1.65)
\psline[linewidth=1.4pt](-1,0.5)(-0.85,0.65)
\psline[linewidth=1.4pt](-1,0.5)(-1.15,0.65)
\psline[linewidth=1.4pt](0,3.5)(-0.15,3.65)
\psline[linewidth=1.4pt](0,3.5)(0.15,3.65)
\psline[linewidth=1.4pt](0,2.5)(-0.15,2.65)
\psline[linewidth=1.4pt](0,2.5)(0.15,2.65)
\psline[linewidth=1.4pt](0,1.5)(-0.15,1.65)
\psline[linewidth=1.4pt](0,1.5)(0.15,1.65)
\psline[linewidth=1.4pt](0,0.5)(-0.15,0.65)
\psline[linewidth=1.4pt](0,0.5)(0.15,0.65)
\psline[linewidth=1.4pt](1,2.5)(0.85,2.65)
\psline[linewidth=1.4pt](1,2.5)(1.15,2.65)
\psline[linewidth=1.4pt](1,1.5)(0.85,1.65)
\psline[linewidth=1.4pt](1,1.5)(1.15,1.65)
\psline[linewidth=1.4pt](3,2.5)(2.85,2.65)
\psline[linewidth=1.4pt](3,2.5)(3.15,2.65)

\rput[tl](4.2,5.1){$1$}
\rput[tl](4.2,4.1){$2$}
\rput[tl](4.2,3.1){$3$}
\rput[tl](0.95,0.8){$8$}
\rput[tl](-0.1,-0.1){$9$}
\rput[tl](-2.2,-0.1){$11$}
\rput[tl](-3.22,-0.1){$12$}
\rput[tl](-1.16,-0.1){$10$}
\rput[tl](3.2,2.05){$4$}
\rput[tl](2.95,1.8){$5$}
\rput[tl](1.92,1){$6$}
\rput[tl](1.2,1){$7$}
\psline[linewidth=1.6pt,linestyle=dashed,dash=3pt 3pt](-3,0)(-3.5,0)
\psline[linewidth=1.6pt,linestyle=dashed,dash=3pt 3pt](-3.5,0)(-3.5,5.5)
\psline[linewidth=1.6pt,linestyle=dashed,dash=3pt 3pt](-3.5,5.5)(4,5.5)
\psline[linewidth=1.6pt,linestyle=dashed,dash=3pt 3pt](4,5.5)(4,5)
\end{pspicture*}
\begin{pspicture*}(-4,-1)(5,6)
\psset{dotstyle=*,dotsize=3pt 0,linewidth=0.8pt,arrowsize=3pt 2,arrowinset=0.25}
\psline[linewidth=1.6pt,linestyle=dashed,dash=5pt 5pt](0.85,0)(-3,0)
\psline[linewidth=1.6pt](0,3)(0,2)
\psline[linewidth=1.6pt](0,1)(0,0)
\psline[linewidth=1.6pt](0,3)(4,3)
\psline[linewidth=1.6pt](-1,2)(-1,0)
\psline[linewidth=1.6pt,linestyle=dashed,dash=5pt 5pt](-3,0)(-3.5,0)
\psline[linewidth=1.6pt,linestyle=dashed,dash=5pt 5pt](-3.5,0)(-3.5,5.5)
\psline[linewidth=1.6pt,linestyle=dashed,dash=5pt 5pt](-3.5,5.5)(4,5.5)
\psline[linewidth=1.6pt,linestyle=dashed,dash=5pt 5pt](4,5.5)(4,5)
\psline[linewidth=1.6pt,linestyle=dashed,dash=5pt 5pt](0.85,1.15)(0.85,0)
\psline[linewidth=1.6pt,linestyle=dashed,dash=5pt 5pt](0.85,1.15)(2.85,1.15)
\psline[linewidth=1.6pt,linestyle=dashed,dash=5pt 5pt](2.85,1.15)(2.85,2.15)
\psline[linewidth=1.6pt,linestyle=dashed,dash=5pt 5pt](2.85,2.15)(4,2.15)
\psline[linewidth=1.6pt](0,1)(0.85,1)
\psline[linewidth=1.6pt](-1,2)(0,2)
\psline[linewidth=1.6pt](1,2)(2.85,2)
\psline[linewidth=1.6pt](1,2)(1,1.15)
\psline[linewidth=1.6pt,linestyle=dashed,dash=5pt 5pt](4,5)(4,2.15)
\rput[tl](4.2,5.1){$1$}
\rput[tl](4.2,4.1){$2$}
\rput[tl](4.2,3.1){$3$}
\rput[tl](0.95,0.8){$8$}
\rput[tl](-0.1,-0.1){$9$}
\rput[tl](-2.2,-0.1){$11$}
\rput[tl](-3.22,-0.1){$12$}
\rput[tl](-1.16,-0.1){$10$}
\rput[tl](3.2,2.05){$4$}
\rput[tl](2.95,1.8){$5$}
\rput[tl](1.92,1){$6$}
\rput[tl](1.2,1){$7$}
\psline[linewidth=1.6pt,linestyle=dashed,dash=3pt 3pt](-3,0)(-3.5,0)
\psline[linewidth=1.6pt,linestyle=dashed,dash=3pt 3pt](-3.5,0)(-3.5,5.5)
\psline[linewidth=1.6pt,linestyle=dashed,dash=3pt 3pt](-3.5,5.5)(4,5.5)
\psline[linewidth=1.6pt,linestyle=dashed,dash=3pt 3pt](4,5.5)(4,5)
\psline[linewidth=1.6pt,linestyle=dotted](-2,4)(-2,0)
\psline[linewidth=1.6pt,linestyle=dotted](-2,4)(4,4)

\end{pspicture*}
\caption{The $\Gamma$-graph of an example in $(\Gr_{5,12})_{\geq0}$ and the path families corresponding to $M'((2,6),\M_L)$ and $M((2,6),\M_L)$.}\label{fig:addahook}
\end{figure}
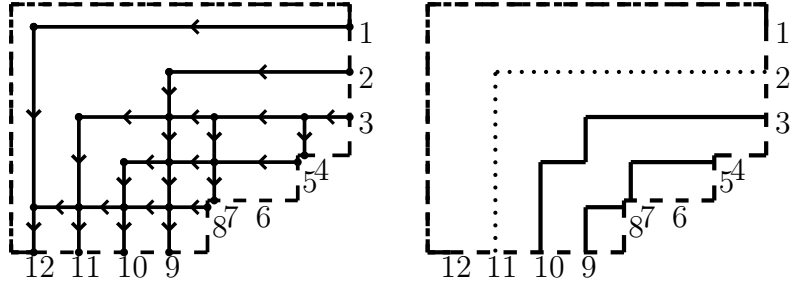
\end{example}

\begin{thm} \label{thm:le-diag}
For $x\in(\Gr_{kn})_{\geq0}$, set $\M(x)=\{J\in{[n]\choose k}: P_J(x)\neq0\}$.  Then the filling of $\lambda(\M(x))$ given by
\begin{equation}
L(x)_B=
\begin{cases}
0 & \text{if $P_{M(B,\M(x))}(x)=0$;}\\
+ & \text{if $P_{M(B,\M(x))}(x)\neq 0$.}
\end{cases}
\label{eq:le-diagram def}
\end{equation}
is a $\Le$-diagram, and $x$ lies in the positroid cell $\positL$.
\end{thm}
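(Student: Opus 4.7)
The plan is to combine Postnikov's theorem (Theorem~\ref{thm:postnikov-bijection}) with Lemma~\ref{lem:nw-path-coll}. By Theorem~\ref{thm:postnikov-bijection}, there is a unique $\Le$-diagram $L$ fitting inside the $k\times(n-k)$ rectangle such that $x\in\positL$; in particular $\M(x)=\M_L$. The entire statement then reduces to the box-by-box identity $L(x)=L$, for then $L(x)$ is automatically a $\Le$-diagram and $x\in\positL=\positLx$.

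The first substep is to verify that the ambient shape is identified correctly, i.e.\ $\lambda(\M(x))=\lambda(L)$. Since $\M(x)=\M_L$, this reduces to showing that the lexicographically minimum element of $\M_L$ is the source set $I(L)$ of the $\Gamma$-graph $G_L$. The membership $I(L)\in\M_L$ follows from Postnikov's face-weight normalization: the trivial (empty) path family from sources to themselves has total weight equal to the product of all face weights of $N_T$, which is $1$, so $P_{I(L)}=1$. To rule out lex-smaller sets, one uses the geometry of $G_L$: every path proceeds only west and then south from a source, so non-crossing paths from $I(L)$ to a destination set $J$ force the $r$th smallest element of $J$ to lie weakly southwest of the $r$th source, hence to carry a label $\geq i_r$.

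With the shapes identified, both $L$ and $L(x)$ are fillings of $\lambda(L)$, so a comparison box by box is meaningful. For each box $B$, the identity $\M(x)=\M_L$ gives $M(B,\M(x))=M(B,\M_L)$, and Lemma~\ref{lem:nw-path-coll}(2) yields the chain of equivalences
\begin{align*}
L(x)_B=+ &\iff P_{M(B,\M(x))}(x)\neq 0 \\
&\iff M(B,\M_L)\in\M_L \\
&\iff L_B=+,
\end{align*}
so $L(x)=L$, completing the argument.

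The main technical obstacle is the shape-matching step, specifically the lex-minimality of $I(L)$ in $\M_L$. The non-crossing path argument is intuitive but requires careful bookkeeping of how paths from higher sources interact with destinations along the staircase boundary of $\lambda$; once in place, however, the rest of the proof is a direct appeal to the definition of $L(x)$ and to Lemma~\ref{lem:nw-path-coll}.
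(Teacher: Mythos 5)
Your proposal is correct and follows essentially the same route as the paper: invoke Theorem~\ref{thm:postnikov-bijection} to place $x$ in a unique cell $\positL$ (so $\M(x)=\M_L$) and then use Lemma~\ref{lem:nw-path-coll} to conclude $L(x)=L$ box by box. The only difference is one of detail: you explicitly justify the shape-matching step (that the lexicographically minimal element of $\M_L$ is the source set of $G_L$, so $\lambda(\M(x))=\lambda(L)$) and use part (2) of the lemma directly, whereas the paper's one-sentence proof leaves this implicit and appeals to part (3); your added argument is sound.
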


\begin{proof}
Combining Theorem~\ref{thm:postnikov-bijection} and Lemma~\ref{lem:nw-path-coll}, each point $x\in(\Gr_{kn})_{\geq0}$ lies in a unique positroid cell $\positL$ and therefore we must have a unique $\Le$-diagram $L$ such that $P_{M(B,\M(x))}=P_{M(B,\M_L)}$ for all boxes $B\in \lambda(L)=\lambda(\M(x))$.
\end{proof}

\section{The $\Le$-tableau associated with a point in $\positL$}\label{sec:mobius}

In Postnikov's original work, the map from $(\Gr_{kn})_{\geq0}$ to $\bigcup_L\T_{L}$ is given recursively.  In this section, we provide an explicit description of that map.  More precisely, given a point $x\in\positL$, we give combinatorial formulas for the entries of the parametrizing $\Le$-tableau, which we call \emph{$\Le$-coordinates} for $x$.

For each box $B$ in $\lambda$, let $H(B)$ denote the collection of boxes lying under the $B$ hook.  For each box $B$ with $L_B=+$, let $F(B)$ denote the face with northwest corner $B$, i.e., the collection of boxes which lie in the same face as $B$ in the $\Gamma$-graph $G$. We may simply write $F$ for $F(B)$ if there is no need to emphasize the northwest corner of $F$.  The $\Le$-property ensures that the northwest boundary of each face $F=F(B)$ is a portion of the $B$ hook; we may also call this the $F$ hook.

\begin{defn}
In a $\Gamma$-graph $G$, call a collection $W$ of paths a \emph{generalized path} if the paths in $W$ are pairwise disjoint, and no path of $W$ lies southeast of another path in $W$.

For a generalized path $W$ in a $\Gamma$-graph $G$, let $\mathcal{OC}(W)$ denote the set of \emph{outer corners} of $W$, that is, those boxes $B$ for which the northern and western boundaries of $B$ are both edges of $W$.  We order the outer corners from northeast to southwest.  Let $\mathcal{IC}(W)$ denote the \emph{inner corners} of $W$, that is, those boxes $B$ such the northwest boundary of $B$ is formed by portions of the hooks of two consecutive outer corners. Note that an inner corner need not be adjacent to the corresponding outer corners.  The $\Le$-property ensures that each outer or inner corner $B$ satisfies $L_{B}=+$.
\end{defn}

Let $D_F$ be the unique generalized path lying weakly under the $F$ hook which contains the entire southeast border of $F$. Informally, $\mathcal{OC}(D_F)$ indexes the hooks which determine the southeast boundary of the face $F$, and $\mathcal{IC}(D_F)$ indexes the hooks which are intersections of two hooks corresponding to adjacent outer corners.

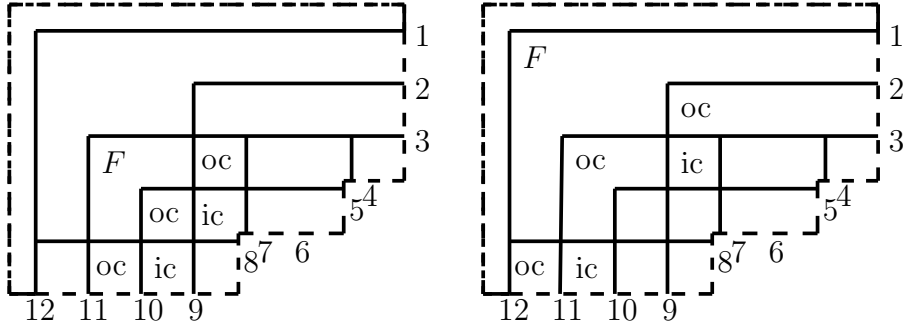
\begin{figure}[ht]
\psset{unit=0.7cm}
\begin{pspicture*}(-4,-1)(5,6)
\psset{dotstyle=*,dotsize=3pt 0,linewidth=0.8pt,arrowsize=3pt 2,arrowinset=0.25}
\psline[linewidth=1.4pt](-3,5)(4,5)
\psline[linewidth=1.4pt,linestyle=dashed,dash=5pt 5pt](0.85,0)(-3,0)
\psline[linewidth=1.4pt](-3,0)(-3,5)
\psline[linewidth=1.4pt](4,4)(0,4)
\psline[linewidth=1.4pt](0,4)(0,0)
\psline[linewidth=1.4pt](-2,0)(-2,3)
\psline[linewidth=1.4pt](-2,3)(4,3)
\psline[linewidth=1.4pt](-1,2)(-1,0)
\psline[linewidth=1.4pt,linestyle=dashed,dash=5pt 5pt](-3,0)(-3.5,0)
\psline[linewidth=1.4pt,linestyle=dashed,dash=5pt 5pt](-3.5,0)(-3.5,5.5)
\psline[linewidth=1.4pt,linestyle=dashed,dash=5pt 5pt](-3.5,5.5)(4,5.5)
\psline[linewidth=1.4pt,linestyle=dashed,dash=5pt 5pt](4,5.5)(4,5)
\psline[linewidth=1.4pt,linestyle=dashed,dash=5pt 5pt](0.85,1.15)(0.85,0)
\psline[linewidth=1.4pt,linestyle=dashed,dash=5pt 5pt](0.85,1.15)(2.85,1.15)
\psline[linewidth=1.4pt,linestyle=dashed,dash=5pt 5pt](2.85,1.15)(2.85,2.15)
\psline[linewidth=1.4pt,linestyle=dashed,dash=5pt 5pt](2.85,2.15)(4,2.15)
\psline[linewidth=1.4pt](-3,1)(0.85,1)
\psline[linewidth=1.4pt](1,3)(1,1.15)
\psline[linewidth=1.4pt](-1,2)(2.85,2)
\psline[linewidth=1.4pt](3,3)(3,2.15)
\psline[linewidth=1.4pt,linestyle=dashed,dash=5pt 5pt](4,5)(4,2.15)
\rput[tl](4.2,5.1){$1$}
\rput[tl](4.2,4.1){$2$}
\rput[tl](4.2,3.1){$3$}
\rput[tl](0.95,0.8){$8$}
\rput[tl](-0.1,-0.1){$9$}
\rput[tl](-2.2,-0.1){$11$}
\rput[tl](-3.22,-0.1){$12$}
\rput[tl](-1.16,-0.1){$10$}
\rput[tl](3.2,2.05){$4$}
\rput[tl](2.95,1.8){$5$}
\rput[tl](1.92,1){$6$}
\rput[tl](1.2,1){$7$}

\uput[r](-2,2.5){$F$}
\uput[r](-0.1,2.5){oc}
\uput[r](-1.1,1.5){oc}
\uput[r](-2.1,0.5){oc}
\uput[r](-0.1,1.5){ic}
\uput[r](-1,0.5){ic}
\psline[linewidth=1.4pt,linestyle=dashed,dash=3pt 3pt](-3,0)(-3.5,0)
\psline[linewidth=1.4pt,linestyle=dashed,dash=3pt 3pt](-3.5,0)(-3.5,5.5)
\psline[linewidth=1.4pt,linestyle=dashed,dash=3pt 3pt](-3.5,5.5)(4,5.5)
\psline[linewidth=1.4pt,linestyle=dashed,dash=3pt 3pt](4,5.5)(4,5)
\end{pspicture*}\begin{pspicture*}(-4,-1)(5,6)
\psset{dotstyle=*,dotsize=3pt 0,linewidth=0.8pt,arrowsize=3pt 2,arrowinset=0.25}
\psline[linewidth=1.4pt](-3,5)(4,5)
\psline[linewidth=1.4pt,linestyle=dashed,dash=5pt 5pt](0.85,0)(-3,0)
\psline[linewidth=1.4pt](-3,0)(-3,5)
\psline[linewidth=1.4pt](4,4)(0,4)
\psline[linewidth=1.4pt](0,4)(0,0)
\psline[linewidth=1.4pt](-2.04,0)(-2,3)
\psline[linewidth=1.4pt](-2,3)(4,3)
\psline[linewidth=1.4pt](-1,2)(-1,0)
\psline[linewidth=1.4pt,linestyle=dashed,dash=5pt 5pt](-3,0)(-3.5,0)
\psline[linewidth=1.4pt,linestyle=dashed,dash=5pt 5pt](-3.5,0)(-3.5,5.5)
\psline[linewidth=1.4pt,linestyle=dashed,dash=5pt 5pt](-3.5,5.5)(4,5.5)
\psline[linewidth=1.4pt,linestyle=dashed,dash=5pt 5pt](4,5.5)(4,5)
\psline[linewidth=1.4pt,linestyle=dashed,dash=5pt 5pt](0.85,1.15)(0.85,0)
\psline[linewidth=1.4pt,linestyle=dashed,dash=5pt 5pt](0.85,1.15)(2.85,1.15)
\psline[linewidth=1.4pt,linestyle=dashed,dash=5pt 5pt](2.85,1.15)(2.85,2.15)
\psline[linewidth=1.4pt,linestyle=dashed,dash=5pt 5pt](2.85,2.15)(4,2.15)
\psline[linewidth=1.4pt](-3,1)(0.85,1)
\psline[linewidth=1.4pt](1,3)(1,1.15)
\psline[linewidth=1.4pt](-1,2)(2.85,2)
\psline[linewidth=1.4pt](3,3)(3,2.15)
\psline[linewidth=1.4pt,linestyle=dashed,dash=5pt 5pt](4,5)(4,2.15)
\rput[tl](4.2,5.1){$1$}
\rput[tl](4.2,4.1){$2$}
\rput[tl](4.2,3.1){$3$}
\rput[tl](0.95,0.8){$8$}
\rput[tl](-0.1,-0.1){$9$}
\rput[tl](-2.2,-0.1){$11$}
\rput[tl](-3.22,-0.1){$12$}
\rput[tl](-1.16,-0.1){$10$}
\rput[tl](3.2,2.05){$4$}
\rput[tl](2.95,1.8){$5$}
\rput[tl](1.92,1){$6$}
\rput[tl](1.2,1){$7$}

\uput[r](-3,4.5){$F$}
\uput[r](0,3.5){oc}
\uput[r](-2,2.5){oc}
\uput[r](-3.155,0.5){oc}
\uput[r](0,2.5){ic}
\uput[r](-2,0.5){ic}
\psline[linewidth=1.4pt,linestyle=dashed,dash=3pt 3pt](-3,0)(-3.5,0)
\psline[linewidth=1.4pt,linestyle=dashed,dash=3pt 3pt](-3.5,0)(-3.5,5.5)
\psline[linewidth=1.4pt,linestyle=dashed,dash=3pt 3pt](-3.5,5.5)(4,5.5)
\psline[linewidth=1.4pt,linestyle=dashed,dash=3pt 3pt](4,5.5)(4,5)

\end{pspicture*}
\caption{Finding the corners of $D_F(3,6)$ and $D_F(1,7)$.}\label{fig:corners}
\end{figure}

\begin{example}
Consider the $\Gamma$-graph in Figure~\ref{fig:corners}.  We find the inner and outer corners of $D_{F(3,6)}$ and of $D_{F(1,7)}$.  In each graph, the relevant face is marked ``$F$'', outer corners are marked ``oc'', and inner corners are marked ``ic''.
\end{example}

We recall that the M\"obius function $\mu_S$ of a partially ordered set $S$ is given recursively by the rules
\begin{eqnarray*}
  \mu_S(x,x) &=&1, \text{ for all $x\in S$, and}\\
  \mu_S(x,y) &=&-\sum_{x\leq z<y}\mu_S(x,z), \text{ for all $x<y$ in $S$.}
\end{eqnarray*}

For a $\Le$-diagram $L$, let $\mathcal{F}_L$ denote the set of faces of the $\Gamma$-graph $G_L$ which are indexed by $+$ entries in $L$.  We partially order $\mathcal{F}_L$ by the condition that $F_1\leq_L F_2$ if the $F_1$ hook lies weakly northwest of the $F_2$ hook.

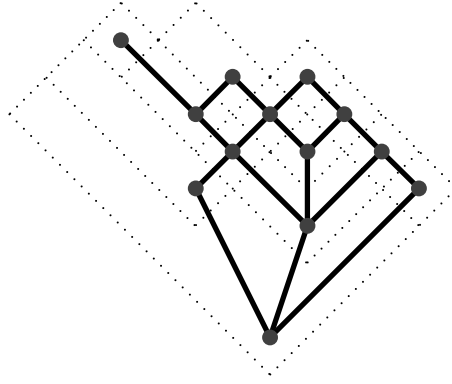
\begin{figure}[ht]
\newrgbcolor{xdxdff}{0.49 0.49 1}
\psset{xunit=0.7cm,yunit=0.7cm,dotstyle=*,dotsize=6pt 0,linewidth=0.8pt,arrowsize=3pt 2,arrowinset=0.25}
\begin{pspicture*}(-5.4,4.9)(3.8,12.3)
\psline[linestyle=dotted](0,5)(-4.95,9.95)
\psline[linestyle=dotted](-4.95,9.95)(-2.83,12.07)
\psline[linestyle=dotted](-2.83,12.07)(-2.12,11.36)
\psline[linestyle=dotted](-2.12,11.36)(-1.41,12.07)
\psline[linestyle=dotted](-1.41,12.07)(0,10.66)
\psline[linestyle=dotted](0,10.66)(0.71,11.36)
\psline[linestyle=dotted](0.71,11.36)(3.54,8.54)
\psline[linestyle=dotted](3.54,8.54)(0,5)
\psline[linewidth=2pt](-1.41,8.54)(0,5.71)
\psline[linewidth=2pt](0,5.71)(0.71,7.83)
\psline[linewidth=2pt](0,5.71)(2.83,8.54)
\psline[linewidth=2pt](2.83,8.54)(2.12,9.24)
\psline[linewidth=2pt](0.71,7.83)(2.12,9.24)
\psline[linewidth=2pt](0.71,7.83)(0.71,9.24)
\psline[linewidth=2pt](0.71,7.83)(-0.71,9.24)
\psline[linewidth=2pt](-0.71,9.24)(-1.41,8.54)
\psline[linewidth=2pt](0,9.95)(-0.71,9.24)
\psline[linewidth=2pt](0,9.95)(0.71,9.24)
\psline[linewidth=2pt](0.71,9.24)(1.41,9.95)
\psline[linewidth=2pt](1.41,9.95)(2.12,9.24)
\psline[linewidth=2pt](1.41,9.95)(0.71,10.66)
\psline[linewidth=2pt](0.71,10.66)(0,9.95)
\psline[linewidth=2pt](0,9.95)(-0.71,10.66)
\psline[linewidth=2pt](-0.71,9.24)(-1.41,9.95)
\psline[linewidth=2pt](-1.41,9.95)(-0.71,10.66)
\psline[linewidth=2pt](-1.41,9.95)(-2.83,11.36)
\psline(1,3)(1,0)
\psline(1,3)(7,3)
\psline(0,1)(4,1)
\psline(2,2)(2,0)
\psline(2,2)(6,2)
\psline(3,2)(3,0)
\psline(3,4)(3,2)
\psline(3,4)(7,4)
\psline(4,3)(4,1)
\psline(6,3)(6,2)
\psline[linestyle=dotted](0.7,7.12)(2.83,9.24)
\psline[linestyle=dotted](0.7,7.12)(-3.54,11.36)
\psline[linestyle=dotted](2.83,7.83)(-0,10.65)
\psline[linestyle=dotted](0.7,8.53)(2.12,9.95)
\psline[linestyle=dotted](0.7,8.53)(-2.12,11.36)
\psline[linestyle=dotted](-0,9.24)(1.41,10.65)
\psline[linestyle=dotted](-1.42,7.83)(-0,9.24)
\psline[linestyle=dotted](-1.42,7.83)(-4.25,10.65)
\psline[linestyle=dotted](-1.42,9.24)(-0,10.65)
\psline[linestyle=dotted](-2.83,10.65)(-2.12,11.36)
\psdots[linecolor=darkgray](0,5.71)
\psdots[linecolor=darkgray](-1.41,8.54)
\psdots[linecolor=darkgray](-0.71,9.24)
\psdots[linecolor=darkgray](0.71,7.83)
\psdots[linecolor=darkgray](-1.41,9.95)
\psdots[linecolor=darkgray](-2.83,11.36)
\psdots[linecolor=darkgray](0.71,9.24)
\psdots[linecolor=darkgray](0,9.95)
\psdots[linecolor=darkgray](-0.71,10.66)
\psdots[linecolor=darkgray](0.71,10.66)
\psdots[linecolor=darkgray](1.41,9.95)
\psdots[linecolor=darkgray](2.12,9.24)
\psdots[linecolor=darkgray](2.83,8.54)
\psdots[linecolor=blue](1,3)
\rput[bl](-5.56,15.23){\blue{$V$}}
\psdots[linecolor=blue](1,0)
\rput[bl](-5.56,15.23){\blue{$W$}}
\rput[bl](-5.56,15.23){$h_1$}
\psdots[linecolor=blue](7,3)
\rput[bl](-5.56,15.23){\blue{$Z$}}
\rput[bl](-5.56,15.23){$i_1$}
\psdots[linecolor=blue](0,1)
\rput[bl](-5.56,15.23){\blue{$A_1$}}
\psdots[linecolor=blue](4,1)
\rput[bl](-5.56,15.23){\blue{$B_1$}}
\rput[bl](-5.56,15.23){$j_1$}
\psdots[linecolor=blue](2,2)
\rput[bl](-5.56,15.23){\blue{$C_1$}}
\psdots[linecolor=blue](2,0)
\rput[bl](-5.56,15.23){\blue{$D_1$}}
\rput[bl](-5.56,15.23){$k_1$}
\psdots[linecolor=blue](6,2)
\rput[bl](-5.56,15.23){\blue{$E_1$}}
\rput[bl](-5.56,15.23){$l_1$}
\psdots[linecolor=xdxdff](3,2)
\rput[bl](-5.56,15.23){\xdxdff{$F_1$}}
\psdots[linecolor=blue](3,0)
\rput[bl](-5.56,15.23){\blue{$G_1$}}
\rput[bl](-5.56,15.23){$m_1$}
\psdots[linecolor=blue](3,4)
\rput[bl](-5.56,15.23){\blue{$H_1$}}
\rput[bl](-5.56,15.23){$n_1$}
\psdots[linecolor=blue](7,4)
\rput[bl](-5.56,15.23){\blue{$I_1$}}
\rput[bl](-5.56,15.23){$p_1$}
\psdots[linecolor=xdxdff](4,3)
\rput[bl](-5.56,15.23){\xdxdff{$J_1$}}
\rput[bl](-5.56,15.23){$q_1$}
\psdots[linecolor=xdxdff](6,3)
\rput[bl](-5.56,15.23){\xdxdff{$K_1$}}
\rput[bl](-5.56,15.23){$r_1$}
\end{pspicture*}\label{fig:poset}
\caption{Here we have the poset for the example given in Figure~\ref{fig:main}.  Dotted lines indicate the boundaries of the original faces, rotated so that the northwest corner is at the lowest point of the poset.}
\end{figure}

\begin{lemma}
  Let $\mu_L=\mu_{\mathcal{F}_L}$ denote the M\"obius function for $\mathcal{F}_L$, with the partial order $\leq_L$. Then for any two faces $F_1=F(B_1)$ and $F_2=F(B_2)$ of $G_L$, we have
 \begin{equation*}
\mu_L(F_1,F_2)=
\begin{cases}
1 & \text{ if $F_1=F_2$ or $B_2\in\mathcal{IC}(D_{F_1})$}\\
-1 & \text{ if $B_2\in\mathcal{OC}(D_{F_1})$}\\
0 & \text{ otherwise.}
\end{cases}
\end{equation*}
\end{lemma}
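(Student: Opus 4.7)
I would prove the formula by strong induction on $F_2$ in the poset $(\mathcal{F}_L,\leq_L)$, using the defining recurrence $\mu_L(F_1,F_2)=-\sum_{F_1\leq F<F_2}\mu_L(F_1,F)$. Writing $F(r,c)$ for the face with northwest corner at row $r$ from the top and column $c$ from the right, the relation $F(r_1,c_1)\leq_L F(r_2,c_2)$ translates to $r_1\leq r_2$ and $c_1\geq c_2$, so the filter of $F_1$ lives in a quarter-plane of $+$ positions. Labeling the outer corners of $D_{F_1}$ from northeast to southwest as $C_1,\ldots,C_s$, both coordinates $r(C_j)$ and $c(C_j)$ are strictly increasing in $j$, which immediately makes the faces $F(C_j)$ pairwise incomparable, and the inner corner $I_j$ sits at $(r(C_{j+1}),c(C_j))$, so that $F(C_j),F(C_{j+1})\leq_L F(I_j)$.

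The combinatorial heart of the argument is an interval description of the relevant down-sets. Define $\mathcal{A}(F_2)=\{j:F(C_j)\leq_L F_2\}$ and $\mathcal{B}(F_2)=\{j:F(I_j)\leq_L F_2\}$. Using monotonicity of $r(C_j)$ and $c(C_j)$ in $j$, the condition defining $\mathcal{A}(F_2)$ imposes a prefix constraint from $r(C_j)\leq r_2$ and a suffix constraint from $c(C_j)\geq c_2$, so $\mathcal{A}(F_2)=[a,b]$ is a (possibly empty) consecutive interval; the analogous condition for $I_j$, combined with $(r(I_j),c(I_j))=(r(C_{j+1}),c(C_j))$, yields $\mathcal{B}(F_2)=[a,b-1]$. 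The essential geometric input is the nonemptiness claim that $\mathcal{A}(F_2)\neq\emptyset$ whenever $F_2>_L F_1$: the NW corner of $F_2$ lies strictly southeast of the path $D_{F_1}$, and walking north (or west) from it until one first hits $D_{F_1}$ lands on a hook segment of some outer corner $C$ whose coordinates then satisfy both $r(C)\leq r_2$ and $c(C)\geq c_2$. I expect this nonemptiness step to be the main obstacle, since rigorously establishing that $D_{F_1}$ has the expected staircase shape — and that the first hit really lies on an outer-corner hook — uses the $\Le$-property together with the defining characterization of $D_F$ as the unique generalized path containing the SE border of $F$.

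Granted these ingredients, the inductive step collapses. By the inductive hypothesis, the only $F\in[F_1,F_2)$ with $\mu_L(F_1,F)\neq 0$ are $F_1$, the faces $F(C_j)$ with $j\in\mathcal{A}(F_2)$ and $F(C_j)\neq F_2$, and the faces $F(I_j)$ with $j\in\mathcal{B}(F_2)$ and $F(I_j)\neq F_2$; writing $\mathcal{A}^<(F_2)$ and $\mathcal{B}^<(F_2)$ for the corresponding strict index sets, the recurrence becomes
\[
\mu_L(F_1,F_2)=-\bigl(1-|\mathcal{A}^<(F_2)|+|\mathcal{B}^<(F_2)|\bigr).
\]
Four cases finish the argument. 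If $F_2=F_1$, the base case gives $\mu_L=1$. If $F_2=F(C_j)$, incomparability of outer corners forces $\mathcal{A}(F_2)=\{j\}$ and $\mathcal{B}(F_2)=\emptyset$, so $|\mathcal{A}^<|=|\mathcal{B}^<|=0$ and $\mu_L=-1$. If $F_2=F(I_j)$, one has $\mathcal{A}(F_2)=\{j,j+1\}$ and $\mathcal{B}(F_2)=\{j\}$, with $I_j$ excluded from $\mathcal{B}^<$, so $\mu_L=-(1-2+0)=1$. In every remaining case $\mathcal{A}^<(F_2)=\mathcal{A}(F_2)=[a,b]$ and $\mathcal{B}^<(F_2)=\mathcal{B}(F_2)=[a,b-1]$ with $a\leq b$ by the nonemptiness step, giving $\mu_L=-(1-(b-a+1)+(b-a))=0$, in agreement with the three branches of the claimed formula.
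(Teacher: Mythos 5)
Your argument is correct and is essentially the paper's own proof with the details filled in: the paper verifies the same defining identity $\sum_{F_1\le F_2\le F}\mu_L(F_1,F_2)=\delta_{F_1,F}$ by exactly the inclusion-exclusion you carry out via the interval structure $\mathcal{A}(F_2)=[a,b]$, $\mathcal{B}(F_2)=[a,b-1]$ and the nonemptiness step, and it explicitly leaves that computation to the reader. The one point you pass over silently is that when $\lambda$ is not the full rectangle an inner corner $I_j$ may fail to exist for some $j$ (the hooks of consecutive outer corners need not meet, so $D_{F_1}$ can have several components); this is harmless because if $j,j+1\in\mathcal{A}(F_2)$ then $B_2$ lies under both hooks, forcing them to intersect, so every $I_j$ with $j\in[a,b-1]$ does exist and your formula for $\mathcal{B}(F_2)$ still holds.
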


\begin{proof}
We see that our M\"obius function $\mu_{L}$ has the following interpretation.  For a fixed face $F_1$, we assign to each hook $H(F_2)$ the quantity $\mu_L(F_1,F_2)$.  That is, we count the faces lying under $H(F_2)$ with signed multiplicity $\mu_L(F_1,F_2)$.  By the definition of a M\"obius function, this means we want the total count for a face $F$ to be exactly one if $F=F_1$ and zero if $F\neq F_1$.  The proof is then completed by a simple inclusion-exclusion argument, which is left to the reader.
\end{proof}

To avoid unwieldy notation, we will write $M(B)$ and $M'(B)$ in place of $M(B,\M_L)$ and $M'(B,\M_L)$ when the appropriate $\Le$-diagram $L$ is clear from context.

\begin{thm} \label{thm:main-mobius} Suppose $x\in \positL$.  Then the $\Le$-coordinates of $x$ are the entries of the $\Le$-tableau $T(x)\in\T_L$ defined below.  That is, $\Meas_L(T(x))=x$, and $T(x)$ is the unique $\Le$-tableau whose image under $\Meas_L$ is $x$.
\begin{equation}
T(x)_B=
\begin{cases}
0 & \text{if $P_{M(B)}(x)=0$;}\\
\prod_{L_{C}=+}\left(\frac{P_{M(C)}(x)}{P_{M'(C)}(x)}\right)^{\mu(B,C)} & \text{if $P_{M(B)}(x)\neq 0$.}
\end{cases}
\label{eq:le-tableaux}
\end{equation}
\end{thm}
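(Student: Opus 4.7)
The plan is to exploit the injectivity of $\Meas_L$ from Theorem~\ref{thm:postnikov-bijection}: every $x\in\positL$ equals $\Meas_L(T)$ for a unique $T\in\T_L$, so it suffices to check that the right-hand side of \eqref{eq:le-tableaux}, evaluated at $x=\Meas_L(T)$, recovers $T_B$ for each box $B$ of $\lambda$. The vanishing case is immediate: when $L_B=0$, Lemma~\ref{lem:nw-path-coll}(2) gives $M(B)\notin\M_L$, so $P_{M(B)}(x)=0$ and the first branch of \eqref{eq:le-tableaux} matches $T_B=0$. The remainder of the argument concerns boxes with $L_B=+$.

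The technical core is a ratio identity
\[
\frac{P_{M(C)}(\Meas_L(T))}{P_{M'(C)}(\Meas_L(T))}\;=\;\prod_{F\in\mathcal{F}_L:\,F\geq_L F(C)} T_F
\]
for every box $C$ with $L_C=+$, where the product ranges over faces whose hooks lie weakly southeast of the $C$ hook (i.e., over the upper-set of $F(C)$ in $(\mathcal{F}_L,\leq_L)$). To prove it, I would expand both Pl\"ucker coordinates via Definition~\ref{def:gamma-meas-map}. By Lemma~\ref{lem:nw-path-coll}(1) the northwest-most path family for $M'(C)$ is uniquely determined, and the northwest-most path family for $M(C)$ is obtained by rerouting the lazy path at source $i_r$ along the $C$ hook into sink $j_c$. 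The goal is to promote this single-family rerouting into a weight-preserving correspondence between arbitrary non-intersecting path families contributing to the two sums. Concretely, I would decompose the $\Gamma$-network along the SE boundary of $F(C)$ into a ``frozen'' outer region, where the constraint $J\cap A_{r,c}=I\cap A_{r,c}$ forces the path behavior, and an ``inner'' region that contains exactly the faces $F\geq_L F(C)$. Both Pl\"ucker sums then factor across this decomposition so that the common outer factor cancels in the quotient, leaving precisely $\prod_{F\geq_L F(C)} T_F$.

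Granting the ratio identity, the conclusion follows from formal M\"obius inversion on $(\mathcal{F}_L,\leq_L)$. Taking logarithms, the identity reads $g(F(C))=\sum_{F(D)\geq_L F(C)} f(F(D))$ with $f(F)=\log T_F$ and $g(F(C))=\log\!\bigl(P_{M(C)}/P_{M'(C)}\bigr)$. The standard inversion formula, combined with the explicit values of $\mu_L$ furnished by the preceding lemma, yields
\[
\log T_{F(B)}\;=\;\sum_{C:\,L_C=+}\mu_L(F(B),F(C))\,\log\!\left(\frac{P_{M(C)}(x)}{P_{M'(C)}(x)}\right),
\]
and exponentiating reproduces the second branch of \eqref{eq:le-tableaux}.

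The main obstacle is the ratio identity itself. The Lindstr\"om expansions of $P_{M(C)}$ and $P_{M'(C)}$ each sum over many non-intersecting path families, so a priori the quotient need not even be a monomial in the face weights, let alone the specific one claimed. The cleanest route is likely an induction that peels off the outer corners of $D_{F(C)}$ one at a time, using the $\Le$-property to pair up subfamilies; alternatively, one may perform a gauge transformation on the $\Gamma$-network weights that trivializes the outer region and reduces the identity to its one-hook analogue (verifiable directly, as in my sanity checks on $\lambda=(1)$ and $\lambda=(1,1)$). Either approach requires tracking precisely which faces lie southeast of which path after each added or removed arc, and it is here that the structural geometry of $\Gamma$-graphs, rather than abstract M\"obius theory, does the real work.
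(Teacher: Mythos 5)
Your skeleton coincides with the paper's proof: reduce to $x=\Meas_L(T)$ via the injectivity in Theorem~\ref{thm:postnikov-bijection}, dispose of the vanishing case with Lemma~\ref{lem:nw-path-coll}, prove that $P_{M(C)}(x)/P_{M'(C)}(x)$ is the product of the face weights under the $C$ hook, and finish by multiplicative M\"obius inversion against the computed values of $\mu_L$. Those outer steps are all fine and are exactly what the paper does.

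The gap is the ratio identity, which you correctly isolate as the crux but then leave open while proposing machinery (a weight-preserving correspondence between arbitrary contributing path families, or a decomposition of the network into frozen and inner regions) that is both unnecessary and harder to execute than the intended argument. Your stated worry --- that the Lindstr\"om expansions of $P_{M(C)}$ and $P_{M'(C)}$ ``each sum over many non-intersecting path families,'' so that the quotient need not even be a monomial --- overlooks the content of part (1) of Lemma~\ref{lem:nw-path-coll}, which you cite but never exploit: the sum in Definition~\ref{def:gamma-meas-map} defining $P_{M'(C)}(\Meas_L(T))$ has exactly \emph{one} term, namely the northwest-most non-intersecting collection lying strictly southeast of the $C$ hook. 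The analogous uniqueness for $M(C)$ when $L_C=+$ holds for the same reason, and the unique collection with destination set $M(C)$ is the one for $M'(C)$ together with the $C$ hook itself. Hence the quotient is literally the weight of the single path running along the $C$ hook, which by Definition~\ref{def:gamma-meas-map} is the product of the weights of the faces lying southeast of it, i.e.\ the faces $F$ with $F\geq_L F(C)$, each with multiplicity one. This one-line observation is what the paper means by ``we can easily see,'' and inserting it closes your proof; without it, the central identity on which your M\"obius inversion rests remains unestablished.
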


\begin{proof}
By Theorem~\ref{thm:postnikov-bijection}, there exists a unique $\Le$-tableau $T$ satisfying $\Meas_L(T)=x$.  Here we show that $T$ must be the $\Le$-tableau $T(x)$ defined above.  Suppose that $T$ satisfies $P_J(\Meas_L(T))=P_J(x)$ for all $J \in {[n]\choose k}$.  By Theorem~\ref{thm:le-diag}, if $P_{M(B)}(x)=0$, we must have $L_B=0$, and therefore $T_B=0$.  Whenever $L_B=+$, we can easily see that the ratio \[\frac{P_{M(B)}(\Meas_L(T))}{P_{M'(B)}(\Meas_L(T))}\] is the product of the weights of all faces under the $B$ hook in the $\Gamma$-network $N_T$, each with multiplicity one.  By assumption, we have \[\frac{P_{M(B)}(\Meas_L(T))}{P_{M'(B)}(\Meas_L(T))}=\frac{P_{M(B)}(x)}{P_{M'(B)}(x)}.\]  Since the weight of a hook is simply the product of the weights of faces below the hook, a multiplicative version of M\"obius inversion implies that the weight of the face whose northwest corner is $B$ is given by the ratio \[\prod_{L_{C}=+}\left(\frac{P_{M(C)}(x)}{P_{M'(C)}(x)}\right)^{\mu(B,C)}.\]
Since the positive entries of $T$ are simply the weights of the faces in $N_T$, the entries of $T$ must be those of $T(x)$ given in equation (\ref{eq:le-tableaux}).
\end{proof}

\section{$\Le$-coordinates of a positroid cell in terms of a minimal set of Pl\"ucker coordinates}\label{sec:minimal}

By Theorem~\ref{thm:postnikov-bijection}, the dimension of a positroid cell $\positL$ is $|L|$, the number of ``+'' entries in the corresponding $\Le$-diagram $L$.  However, finding the $\Le$-coordinates of a point $x\in\positL$ via equation~(\ref{eq:le-tableaux}) may require roughly twice this many Pl\"ucker variables.  In this section, we give a formula for the map from $\positL$ to $\T_L$, using precisely $|L|$ Pl\"ucker variables.  This formula is, of course, equivalent to our first formula modulo Pl\"ucker relations, but we now use exactly the desired number of parameters.

Suppose $x\in\positL$ and $\Meas_L(T)=x$.  For a box $B$ in $\lambda$ with $L_B=+$, let $F=F(B)$ be the corresponding face in the $\Gamma$-network $N_T$. Let $U_F$ denote the $F$ hook.  Let $D_F$ denote the unique generalized path lying weakly under $U_F$ which forms the southeastern boundary of $F$.  Let $U_F'$ and $D_F'$ be the northwest-most generalized paths lying strictly southeast of $U_F$ and $D_F$, respectively.

For a generalized path $W$ in a $\Gamma$-network $N$ and a box $B$ in $\lambda$, we set
\begin{equation*} \label{eq:le-tableau-min}
\varepsilon_W(B)=
\begin{cases}
1 & \text{ if $B\in \mathcal{OC}(W)$;}\\
-1 & \text{ if $B\in \mathcal{IC}(W)$;}\\
0 & \text{ otherwise.}
\end{cases}
\end{equation*}
\begin{thm}\label{thm:main-minimal}
Suppose $x\in\positL$ and $\Meas_L(T)=x$.  Then the $\Le$-coordinates of $x$ may be written in the alternate form
\begin{equation}
T_B=
\begin{cases}
0 & \text{if $P_{M(B)}(x)=0$;}\\
\prod_{L_{C}=+}(P_{M(C)}(x))^{\varepsilon(C)} & \text{if $P_{M(B)}(x)\neq 0$,}
\end{cases}
\label{eq:le-tableaux}
\end{equation}
where $\varepsilon(C)=[\varepsilon_{U_F}(C)-\varepsilon_{U_F'}(C)]-[\varepsilon_{D_F}(C)-\varepsilon_{D_F'}(C)]$.
\end{thm}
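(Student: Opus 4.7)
The plan is to deduce Theorem~\ref{thm:main-minimal} from Theorem~\ref{thm:main-mobius} by eliminating the denominators $P_{M'(C)}$ appearing there. First I would observe that the lemma computing $\mu_L$ (stated in Section~\ref{sec:mobius}) gives $\mu(B,C) = \varepsilon_{U_F}(C) - \varepsilon_{D_F}(C)$: this is a case check using that $U_F$ is a single hook with unique outer corner $B$ (so $\varepsilon_{U_F}(C) = [C=B]$) and that $\varepsilon_{D_F}(C) = +1$ on $\mathcal{OC}(D_F)$ and $-1$ on $\mathcal{IC}(D_F)$, matching the values of $\mu(B,C)$ in the lemma. Consequently the exponent in the theorem satisfies $\varepsilon(C) = \mu(B,C) - (\varepsilon_{U_F'}(C) - \varepsilon_{D_F'}(C))$, so the theorem reduces to the Pl\"ucker identity
\begin{equation*}
\prod_{L_C=+} P_{M(C)}^{\varepsilon_{U_F'}(C) - \varepsilon_{D_F'}(C)} \;=\; \prod_{L_C=+} P_{M'(C)}^{\mu(B,C)}. \qquad (\star)
\end{equation*}

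To prove $(\star)$ I would use two ingredients. The first, already implicit in the proof of Theorem~\ref{thm:main-mobius}, is the identity $\wt(U_C) = P_{M(C)}/P_{M'(C)}$: the NW-most non-intersecting path family realizing $M(C)$ is obtained from the NW-most family realizing $M'(C)$ by adjoining the $C$ hook, so their Lindstr\"om weights multiply. The second is an inclusion-exclusion identity stating that, for any generalized path $W$,
\begin{equation*}
\prod_{F \text{ weakly SE of } W} T_F \;=\; \prod_C \wt(U_C)^{\varepsilon_W(C)},
\end{equation*}
obtained by covering the region weakly SE of $W$ by the SE-regions of the outer-corner hooks of $W$ and correcting the over-counted intersections using the inner corners (a cleaner multiplicative restatement of the argument used for the M\"obius lemma above).

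Applying both ingredients to each of the four generalized paths $U_F, D_F, U_F', D_F'$ and forming the alternating combination $\wt^*(U_F)\,\wt^*(D_F')/(\wt^*(U_F')\,\wt^*(D_F))$, where $\wt^*(W)$ denotes the ``each face counted once'' product weight appearing on the left of the inclusion-exclusion identity, yields $\prod_C (P_{M(C)}/P_{M'(C)})^{\varepsilon(C)}$. Theorem~\ref{thm:main-mobius} already gives $\wt^*(U_F)/\wt^*(D_F) = T_B$, so proving the theorem's claim is equivalent to showing that $\wt^*(U_F')/\wt^*(D_F')$ equals $\prod_C P_{M'(C)}^{\varepsilon(C)}$, which after expanding $\wt^*$ via the inclusion-exclusion identity and separating the $P_{M(C)}$ and $P_{M'(C)}$ factors is precisely $(\star)$. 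The main obstacle is the combinatorial verification that the outer/inner corners of $U_F'$ and $D_F'$ interact with those of $D_F$ in such a way that the $P_{M'(C)}$ factors on the right-hand side of $(\star)$ assemble with support exactly on $\{B\} \cup \mathcal{OC}(D_F) \cup \mathcal{IC}(D_F)$ as dictated by $\mu(B,C)$; once this matching is pinned down --- in particular, handling the degenerate cases where $F$ lies against the boundary of $G_L$ and some of the paths $U_F', D_F'$ shorten --- $(\star)$ and hence Theorem~\ref{thm:main-minimal} follow.
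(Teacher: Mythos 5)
Your algebraic reduction is sound as far as it goes: the identification $\mu(B,C)=\varepsilon_{U_F}(C)-\varepsilon_{D_F}(C)$ is correct, and given Theorem~\ref{thm:main-mobius} the statement of Theorem~\ref{thm:main-minimal} is indeed equivalent to your identity $(\star)$. But that equivalence is exactly the problem: $(\star)$ carries the entire content of the theorem, and you never prove it --- you explicitly defer it as ``the main obstacle'' to be ``pinned down.'' Moreover, the two ingredients you assemble cannot close that gap even in principle. Ingredient one ($\wt(U_C)=P_{M(C)}/P_{M'(C)}$) and ingredient two (the inclusion--exclusion identity for $\wt^*(W)$) both live on the face-weight side: they express products of face weights in terms of hook weights, which is the same M\"obius-type bookkeeping already used for Theorem~\ref{thm:main-mobius}; substituting ingredient one into $(\star)$ just returns you to the statement of the theorem. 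What $(\star)$ actually requires is a statement of the opposite kind: that a \emph{single} Pl\"ucker coordinate $P_{M'(C)}$ --- the weight of one nested non-intersecting path family --- factors as a monomial in the coordinates $P_{M(C')}$. No amount of bookkeeping about where the outer and inner corners of $U_F'$, $D_F'$, and $D_F$ sit can produce such a factorization; it is a statement about how nested path families decompose, not about regions of the diagram.

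That missing decomposition is precisely Lemma~\ref{lem:nestedfamilyweight}: $\wt(\Nest(W))=\prod_{C}(P_{M(C)})^{\varepsilon_W(C)}$ for every generalized path $W$, proved by induction on the number of outer corners via the splitting $\wt(\Nest(W))\,\wt(\Nest(\widehat{W}))=\wt(\Nest(\overrightarrow{W}))\,\wt(\Nest(\overleftarrow{W}))$, which in turn rests on an explicit re-pairing of the paths of the nested families at their intersection vertices (and uses the $\Le$-property). With that lemma in hand, the paper does not route through Theorem~\ref{thm:main-mobius} at all: it writes $T_B$ directly as $\bigl(\wt(\Nest(U_F))/\wt(\Nest(U_F'))\bigr)\big/\bigl(\wt(\Nest(D_F))/\wt(\Nest(D_F'))\bigr)$ and reads off the exponent $\varepsilon(C)$. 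If you wish to keep your route through $(\star)$, you would still need Lemma~\ref{lem:nestedfamilyweight} (applied to the families realizing $P_{M'(C)}$ for $C$ ranging over $\{B\}\cup\mathcal{OC}(D_F)\cup\mathcal{IC}(D_F)$) before the exponent matching you allude to could even be attempted; the induction on outer corners is the unavoidable core of the proof, and it is the step your proposal omits.
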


Before proving Theorem~\ref{thm:main-minimal}, we first state one nearly immediate corollary using the notion of a \emph{totally positive base} given in \cite{FZ1999}.

\begin{cor}
The set of Pl\"ucker coordinates
\[\P_L=\{P_{M(B)}:L_B=+\}\]
forms a \emph{totally positive base} for the non-vanishing Pl\"ucker coordinates $\{P_J:J\in\M_L\}$ of the positroid cell $\positL$.  That is, every Pl\"ucker coordinate $P_J$ with $J\in\M_L$ can be written as a subtraction-free rational expression (i.e., a ratio of two polynomials with nonnegative integer coefficients) in the elements of $\P_L$, and $\P_L$ is a minimal (with respect to inclusion) set with this property.  Further, each $P_J$ with $J\in \M_L$ is a Laurent polynomial in the elements of $\P_L$, with nonnegative coefficients.
\end{cor}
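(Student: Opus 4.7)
The plan is to deduce the corollary directly from Theorem~\ref{thm:main-minimal} and the boundary measurement formula of Definition~\ref{def:gamma-meas-map}. The crucial observation is that Theorem~\ref{thm:main-minimal} exhibits every face weight $T_B$ of the $\Gamma$-network $N_T$ as an explicit Laurent monomial, with integer exponents and coefficient $+1$, in the elements of $\P_L$.

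First I would settle the Laurent polynomial assertion, which also immediately yields the subtraction-free rational expressibility. For each $J\in\M_L$, Definition~\ref{def:gamma-meas-map} gives
\[
P_J(x) = \sum_{A\in\mathcal{A}_J(N_T)} \wt(A),
\]
and each $\wt(A)$ is a product of face weights $T_B$ taken with nonnegative integer multiplicities. Substituting the Laurent-monomial formula for $T_B$ from Theorem~\ref{thm:main-minimal}, one recognizes $P_J(x)$ as a sum of Laurent monomials in $\P_L$, each with coefficient $+1$, hence as a Laurent polynomial in $\P_L$ with nonnegative integer coefficients.

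For minimality I would use a dimension argument. By Theorem~\ref{thm:postnikov-bijection}, $\positL$ is a topological cell of dimension $|L|$, while $|\P_L|=|L|$ by construction. Theorem~\ref{thm:main-minimal} recovers the parametrizing $\Le$-tableau $T=\Meas_L^{-1}(x)$ from the collection $\bigl(P_{M(B)}(x)\bigr)_{L_B=+}$, so the evaluation map
\[
\positL \longrightarrow \mathbb{R}_{>0}^{|L|}, \qquad x \longmapsto \bigl(P_{M(B)}(x)\bigr)_{L_B=+},
\]
is injective. If some proper subset $\P_L\setminus\{P_{M(B_0)}\}$ were still a totally positive base, then $P_{M(B_0)}$ would be a rational function of the remaining Pl\"ucker coordinates, and projecting the above map to its $|L|-1$ complementary factors would still yield an injection. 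This would produce a continuous injection of the cell $\positL\cong\mathbb{R}^{|L|}$ into $\mathbb{R}^{|L|-1}$, contradicting invariance of domain.

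The only step needing attention is verifying that all exponents and multiplicities arising in the substitution are genuine integers, so that one really obtains a Laurent polynomial and not a more general subtraction-free rational expression. This is immediate from the integer-valued formula for $\varepsilon(C)$ in Theorem~\ref{thm:main-minimal} and the combinatorial definition of $\wt(A)$, so I do not expect a serious obstacle in carrying out the plan.
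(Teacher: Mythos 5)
Your proposal is correct, and the first half coincides with the paper's argument: both substitute the Laurent-monomial expressions for the face weights from Theorem~\ref{thm:main-minimal} into the boundary measurement formula of Definition~\ref{def:gamma-meas-map}, obtaining each $P_J$, $J\in\M_L$, as a sum of Laurent monomials in $\P_L$ with coefficient $1$; this simultaneously gives the Laurent-polynomial and subtraction-free claims. (One small point worth stating explicitly in your write-up: the faces of $N_T$ include the outer face, whose weight is $\prod T_B^{-1}$; this is still a Laurent monomial in $\P_L$, so the conclusion is unaffected.) Where you genuinely diverge is in the minimality argument. The paper disposes of minimality in one line by asserting that the elements of $\P_L$ are algebraically independent, so that no one of them can be a rational function of the others. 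You instead argue topologically: the tuple $\bigl(P_{M(B)}(x)\bigr)_{L_B=+}$ determines $T$ and hence $x$, so dropping one coordinate that is functionally determined by the rest would give a continuous injection of $\positL\cong\mathbb{R}^{|L|}$ into $\mathbb{R}^{|L|-1}$, contradicting invariance of domain. Both routes are valid, and they rest on the same underlying fact (injectivity of the evaluation map, which follows from the monomial inversion in Theorem~\ref{thm:main-minimal}); yours has the merit of being fully spelled out rather than left as ``easily verified,'' at the cost of invoking a topological black box. The only care needed in either version is that Pl\"ucker coordinates are projective, so the evaluation map should be taken with a fixed normalization (e.g.\ $P_I\equiv 1$, which is legitimate since $I\in\M_L$ and $I$ is never of the form $M(B)$); with that convention your map is well defined, continuous, and injective, and the argument goes through.
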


\begin{proof}
Suppose $x\in\positL$, with $\Meas(T)=x$.  By Theorem~\ref{thm:main-minimal}, every face weight of the $\Gamma$-network $N_{T}$ can be written as a monomial rational expression in the elements of $\P_L$.  Each Pl\"ucker coordinate $P_J(x)$ is a sum of products of face weights, by Definition~\ref{def:gamma-meas-map}.  It is then clear that each $P_J$ is a Laurent polynomial with nonnegative coefficients in elements of $\P_L$. It is easily verified that the elements of $\P_L$ are algebraically independent, so that $\P_L$ is minimal.
\end{proof}

To prove Theorem~\ref{thm:main-minimal}, we will need the following technical lemma, which gives the weights of certain nested path families.

\begin{lemma}\label{lem:nestedfamilyweight}
Suppose $T$ is a $\Le$-tableau with corresponding $\Gamma$-network $N_T$. Let $W$ be a generalized path in $N_T$. Let $\Nest(W)$ denote the northwest-most non-intersecting path family lying weakly southeast of $W$.  Then
\begin{equation}\label{eq:nestedfamilyweight}
\wt(\Nest(W))=\prod_{L_C=+} \left(P_{M(C)}(\Meas(T))\right)^{\varepsilon_W(C)}.
\end{equation}
\end{lemma}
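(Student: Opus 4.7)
The plan is to reduce the multiplicative identity to a face-by-face additive identity on exponents, and then verify the latter by induction on the structure of $W$.

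First, both sides of \eqref{eq:nestedfamilyweight} are monomials in the face weights of $N_T$: for any non-intersecting path family $A$, one has $\wt(A) = \prod_F \wt(F)^{m_F(A)}$, where $m_F(A)$ denotes the number of paths in $A$ having face $F$ to their southeast. Moreover, for each box $C$ with $L_C = +$, the identification
\[
P_{M(C)}(\Meas(T)) = \wt(\Nest(U_C))
\]
holds: by Lemma~\ref{lem:nw-path-coll}(1), the destination set $M'(C)$ is realized by a unique non-intersecting family in $N_T$, and adjoining the hook $U_C$ (replacing the trivial path at $i_r$) produces the unique family with destinations $M(C) = M'(C) \setminus \{i_r\} \cup \{j_c\}$, which is exactly $\Nest(U_C)$. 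Taking logarithms and comparing exponents of each face weight, the lemma reduces to the face-by-face identity
\[
m_F(\Nest(W)) = \sum_{C:\,L_C = +} \varepsilon_W(C)\,m_F(\Nest(U_C))
\]
for every face $F$ of $N_T$.

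Next, I would prove this reduced identity by induction on $s = |\mathcal{OC}(W)|$. The base case $s = 1$ forces $W = U_C$ for some box $C$, whence the identity becomes tautological. For the inductive step, suppose $W$ has outer corners $C_1 > \cdots > C_s$ (NE to SW) and inner corners $B_1 > \cdots > B_{s-1}$. Let $W'$ be the generalized path obtained by deleting the southwestern-most outer corner $C_s$ together with the adjacent inner corner $B_{s-1}$; by the inductive hypothesis the identity holds for $W'$. It thus suffices to establish the differential identity
\[
m_F(\Nest(W)) - m_F(\Nest(W')) = m_F(\Nest(U_{C_s})) - m_F(\Nest(U_{B_{s-1}})),
\]
which precisely encodes the newly added contributions $\varepsilon_W(C_s) = +1$ and $\varepsilon_W(B_{s-1}) = -1$.

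The main technical obstacle lies in establishing this differential identity. Augmenting $W'$ by the new outer corner $C_s$ forces the path through source $i_{r_{C_s}}$ to be rerouted through the hook $U_{C_s}$, and this local restructuring of the northwest-most family is confined to the region bounded by $U_{C_{s-1}}$, $U_{C_s}$, and $U_{B_{s-1}}$. Matching the resulting change in face multiplicities to the right-hand side requires a careful case analysis according to where $F$ sits relative to these three hooks; in particular, the interaction near the new inner corner $B_{s-1}$ is where the cancellation in the signed sum arises. A possibly cleaner alternative would be to describe $\Nest(V)$ explicitly for each generalized path $V$ (as a staircase of hooks at the outer corners of $V$, extended canonically by NW-most paths for the remaining sources) and derive the face-multiplicity formula directly by an inclusion-exclusion over the SE quadrants of the hooks at $\mathcal{OC}(V)$ and $\mathcal{IC}(V)$, which yields the target identity as a telescoping signed sum.
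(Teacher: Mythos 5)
Your setup follows the same route as the paper's proof: both arguments go by induction on the number of outer corners of $W$, with the base case resting on the identification $P_{M(C)}(\Meas(T))=\wt(\Nest(U_C))$ and the inductive step peeling off the southwest-most hook while correcting by the hook at the adjacent inner corner. Your ``differential identity''
\[
m_F(\Nest(W)) - m_F(\Nest(W')) = m_F(\Nest(U_{C_s})) - m_F(\Nest(U_{B_{s-1}}))
\]
is exactly equivalent (after summing against $\log \wt(F)$ over faces $F$) to the multiplicative identity $\wt(\Nest(W))\,\wt(\Nest(\widehat{W}))=\wt(\Nest(\overrightarrow{W}))\,\wt(\Nest(\overleftarrow{W}))$ that drives the paper's induction, where $\overrightarrow{W}=W'$, $\overleftarrow{W}=U_{C_s}$, and $\widehat{W}=U_{B_{s-1}}$.

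The gap is that you stop at precisely this point: you label the identity ``the main technical obstacle'' and offer two strategies without carrying either out. The first (a case analysis on the position of $F$) is only a promise, and the second, as stated, starts from a false premise: $\Nest(V)$ is \emph{not} the union of the hooks at the outer corners of $V$, because hooks at consecutive outer corners generally intersect and the non-intersecting family must be rerouted there --- that rerouting is exactly where the inner-corner terms come from, so an inclusion--exclusion over southeast quadrants of hooks does not directly apply. The missing mechanism is a level-by-level surgery: decompose each of $\Nest(W)$, $\Nest(\overrightarrow{W})$, $\Nest(\overleftarrow{W})$, $\Nest(\widehat{W})$ into depth levels (a path is at level $i$ if exactly $i$ paths of its family lie strictly northwest of it), and show that the level-$i$ paths $W_i$ and $\widehat{W}_i$ are obtained from $\overrightarrow{W}_i$ and $\overleftarrow{W}_i$ by cutting at their common vertices and reassembling, taking the northwest-most segment between consecutive intersection points for $W_i$ and the southeast-most for $\widehat{W}_i$. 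The two pairs then use the same multiset of edges, hence cover each face the same total number of times, giving $\wt(W_i)\wt(\widehat{W}_i)=\wt(\overrightarrow{W}_i)\wt(\overleftarrow{W}_i)$ and, after taking the product over $i$, the identity you need. Two smaller omissions: you should handle the case where $U_{C_s}$ and $W'$ do not intersect (then there is no inner corner $B_{s-1}$ and the weights simply multiply), so you cannot assume there are exactly $s-1$ inner corners; and your base case silently uses that $\Nest(U_C)$ is the \emph{unique} family with destination set $M(C)$, which is a genuine (if small) extension of Lemma~\ref{lem:nw-path-coll}(1) that deserves a sentence.
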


\begin{figure}[ht]
\psset{unit=0.6cm,dotstyle=*,dotsize=3pt 0,linewidth=0.8pt,arrowsize=3pt 2,arrowinset=0.25}
\begin{pspicture*}(0,-0.5)(19,6.1)
\psline(1,1)(1,6)
\psline(1,6)(8,6)
\psline(8,6)(8,3)
\psline(8,3)(7,3)
\psline(7,3)(7,2)
\psline(7,2)(5,2)
\psline(5,2)(5,1)
\psline(5,1)(1,1)
\psline[linewidth=2pt](8,5)(4,5)
\psline[linewidth=2pt](4,5)(4,4)
\psline[linewidth=2pt](4,4)(2,4)
\psline[linewidth=2pt](2,4)(2,2)
\psline[linewidth=2pt](2,2)(1,2)
\psline[linewidth=2pt](1,2)(1,1)
\psline[linewidth=2pt](8,4)(5,4)
\psline[linewidth=2pt](5,4)(5,3)
\psline[linewidth=2pt](5,3)(3,3)
\psline[linewidth=2pt](3,3)(3,1)
\psline[linewidth=2pt](5,2)(4,2)
\psline[linewidth=2pt](4,2)(4,1)
\psline[linewidth=2pt,linestyle=dotted](2,1.86)(2,1)
\psline[linewidth=2pt,linestyle=dotted](2,1.86)(5,1.84)
\psline(10,1)(10,6)
\psline(10,6)(17,6)
\psline(17,6)(17,3)
\psline(17,3)(16,3)
\psline(16,3)(16,2)
\psline(16,2)(14,2)
\psline(14,2)(14,1)
\psline(14,1)(10,1)
\psline[linewidth=2pt](17,5)(13,5)
\psline[linewidth=2pt](13,5)(13,4)
\psline[linewidth=2pt](13,4)(11,4)
\psline[linewidth=2pt](11,4)(11,1)
\psline[linewidth=2pt](17,4)(14,4)
\psline[linewidth=2pt](14,4)(14,3)
\psline[linewidth=2pt](14,3)(12,3)
\psline[linewidth=2pt](12,3)(12,1)
\psline[linewidth=2pt](14,2)(13,2)
\psline[linewidth=2pt](13,2)(13,1)
\psline[linewidth=2pt,linestyle=dotted](10.1,1.86)(14,1.83)
\psline[linewidth=2pt,linestyle=dotted](10.1,1.86)(10.1,1)
\uput[r](8,5){$W_1$}
\uput[r](8,4){$W_2$}
\uput[u](5.25,1.8){$W_3$}
\uput[d](2,1){$\widehat{W}_1$}
\uput[r](17,5){$\overrightarrow{W_1}$}
\uput[r](17,4){$\overrightarrow{W_2}$}
\uput[u](14.25,1.8){$\overrightarrow{W}_3$}
\uput[d](10,1){$\overleftarrow{W_1}$}
\end{pspicture*}
\caption{Finding the weight of a nested path family.}\label{fig:nestedfamilyweight}
\end{figure}
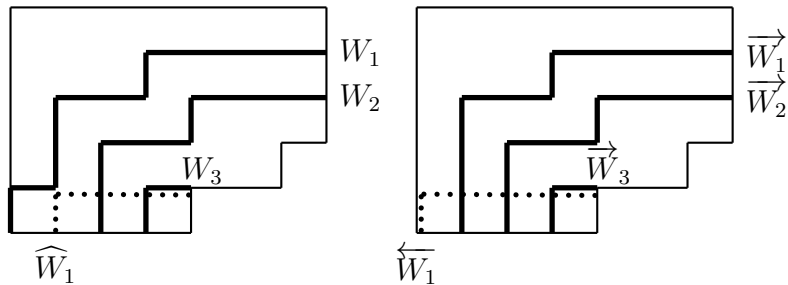

\begin{proof}
We proceed by induction on the number of outer corners of $W$.  If $W$ has a single outer corner, the result follows from the definition of $M(B)$.  Otherwise, assume $W$ has $\ell$ outer corners and split $W$ as follows: let $\overrightarrow{W}$ be the path determined by the first $\ell-1$ outer corners of $W$ (ordered from northeast to southwest) and let $\overleftarrow{W}$ be the hook determined by the last outer corner of $W$.  If $\overrightarrow{W}$ and $\overleftarrow{W}$ do not intersect, the result clearly holds.  (This can happen when $\lambda$ is not the full $k\times n$ rectangle.)  Otherwise, let $\widehat{W}$ be the hook determined by the inner corner of $W$ which is between the last two outer corners of $W$.

Now, $\Nest(W)$ is a disjoint union of paths in $N_T$.  Write $\Nest(W)$ as the ordered collection of path families $(W_1,W_2,\ldots)$, where a path $Y$ in $\Nest(W)$ lies in the block $W_i$ if exactly $i$ paths of $\Nest(W)$ lie strictly northwest of $Y$.  (For $i$ large enough, $W_i$ will be empty.  Recall that the weight of an empty path collection is $1$.)  Write $\Nest(\overrightarrow{W})$, $\Nest(\overleftarrow{W})$, and $\Nest(\widehat{W})$ in the same manner.

We claim that for each $i$, $\wt(\overrightarrow{W}_i)\wt(\overleftarrow{W}_i)=\wt(W_i)\wt(\widehat{W}_i)$.  More precisely, let $(v_1, \ldots, v_m)$ be the vertices at which $\overrightarrow{W}_i$ and $\overleftarrow{W}_i$ intersect.  Then we claim that $W_i$ is the path along edges of $\overrightarrow{W}_i$ or $\overleftarrow{W}_i$ which starts at the source of $\overrightarrow{W}_i$ and takes the northwest-most path between each $v_m$ and $v_{m+1}$ and $\widehat{W}_i$ is the path which starts at the source of $\overleftarrow{W}_i$ and takes the southeast-most path between each $v_m$ and $v_{m+1}$.  This is clearly true for $i=1$.  The remainder, which depends on the $\Le$-property, is left as an exercise for the reader.

Since the weight of a path family is the product of the weights of the individual paths, we then have
\[
\wt(\Nest(W))=\frac{\prod_B \left(P_{M(B)}(\Meas(T))\right)^{\varepsilon_{\overrightarrow{W}}(B)}\cdot\prod_B \left(P_{M(B)}(\Meas(T))\right)^{\varepsilon_{\overleftarrow{W}}(B)}}{\prod_B \left(P_{M(B)}(\Meas(T))\right)^{\varepsilon_{\widehat{W}}(B)}},
\]
which is precisely equation~(\ref{eq:nestedfamilyweight}), since $\overleftarrow{W}$ has a single outer corner (which is an outer corner of $W$) and no inner corners, and $\widehat{W}$ has a single outer corner (which is an inner corner of $W$) and no inner corners.
\end{proof}

\begin{proof}[Proof of Theorem~\ref{thm:main-minimal}:]
 Suppose $W$ is a generalized path in $N_T$. Let $W'$ be the northwest-most generalized path lying strictly below $W$.  We can easily see that the ratio $\frac{\wt(\Nest(W))}{\wt(\Nest(W'))}$ is the product of the weights of the faces lying under $W$, each with multiplicity one, since the weight of each face appearing in this ratio occurs exactly one more time in $\wt(\Nest(W))$ than it does in $\wt(\Nest(W'))$.

Then, since $U_F$ and $D_F$ bound precisely the face $F=F(B)$, the face weight $T_B$ must be given by the ratio
\[\left(\frac{\wt(\Nest(U_F))}{\wt(\Nest(U_F'))}\right)/ \left(\frac{\wt(\Nest(D_F))}{\wt(\Nest(D_F'))}\right).\] Combining this with equation (\ref{eq:nestedfamilyweight}) then yields
 equation~(\ref{eq:le-tableau-min}), since we require that $P_J(\Meas(T))=P_J(x)$ for all $J\in{[n]\choose k}$.
\end{proof}

\section*{Acknowledgements}
The author would like to thank Sergey Fomin and Lauren Williams for many helpful conversations and an anonymous FPSAC reviewer for useful comments on an early version of the manuscript.

\end{document}